\numberwithin{equation}{section}
\newcommand{\RR}{\mathbb{R}}
\newcommand{\CC}{\mathbb{C}}
\newcommand{\eps}{\varepsilon}
\newcommand{\baseField}{F}
\newcommand{\extendedField}{E}
\newcommand{\KP}{F\mathbf{P}}
\newcommand{\LP}{E\mathbf{P}}
\newcommand{\klein}{K}
\newcommand{\lie}{L}
\newtheorem{thm}{Theorem}[section]
\newtheorem{lem}[thm]{Lemma}
\newtheorem{prop}[thm]{Proposition}
\newtheorem{cor}[thm]{Corollary}
\newtheorem*{linearPencilLem}{Lemma \ref{linearPencilProp}}
\theoremstyle{remark}
\newtheorem{defn}[thm]{Definition}
\newtheorem{example}[thm]{Example}
\newtheorem{rem}[thm]{Remark}
\begin{document}
\pagenumbering{arabic}
\title{Sphere tangencies, line incidences, and Lie's line-sphere correspondence}
\author{Joshua Zahl\thanks{University of British Columbia, jzahl@math.ubc.ca}}
\maketitle

\begin{abstract}
Two spheres with centers $p$ and $q$ and signed radii $r$ and $s$ are said to be in contact if $|p-q|^2 = (r-s)^2$. Using Lie's line-sphere correspondence, we show that if $\baseField$ is a field in which $-1$ is not a square, then there is an isomorphism between the set of spheres in $\baseField^3$ and the set of lines in a suitably constructed Heisenberg group that is embedded in $(\baseField[i])^3$; under this isomorphism, contact between spheres translates to incidences between lines.  

In the past decade there has been significant progress in understanding the incidence geometry of lines in three space. The contact-incidence isomorphism allows us to translate statements about the incidence geometry of lines into statements about the contact geometry of spheres. This leads to new bounds for Erd\H{o}s' repeated distances problem in $\baseField^3$, and improved bounds for the number of point-sphere incidences in three dimensions. These new bounds are sharp for certain ranges of parameters.
\end{abstract}

\section{Introduction}\label{introSection}

Let $\baseField$ be a field in which $-1$ is not a square. For each quadruple $(x_1,y_1,z_1,r_1)\in \baseField^4$, we associate the (oriented) sphere $S_1\subset \baseField^3$ described by the equation $(x-x_1)^2 + (y-y_1)^2 + (z-z_1)^2 = r_1^2$. We say two oriented spheres $S_1,S_2$ are in ``contact'' if
\begin{equation}\label{defnOfContact}
(x_1-x_2)^2 + (y_1-y_2)^2 + (z_1-z_2)^2 = (r_1-r_2)^2.
\end{equation}  
If $\baseField=\RR$, then this has the following geometric interpretation: 
\begin{itemize}
\item If $r_1$ and $r_2$ are non-zero and have the same sign, then \eqref{defnOfContact} describes internal tangency. 
\item If $r_1$ and $r_2$ are non-zero and have opposite signs, then \eqref{defnOfContact} describes external tangency.
\end{itemize}

In this paper we will explore the following type of extremal problem in combinatorial geometry: Let $n$ be a large integer. If $\mathcal{S}$ is a set of $n$ oriented spheres in $\baseField^3$ (possibly with some additional restrictions), how many pairs of spheres can be in contact? This question will be answered precisely in Theorem \ref{spheresInGeneralFieldThm} below.

When $\baseField=\RR$, variants of this problem have been studied extensively in the literature \cite{ApfelbaumSharir, CEGSW, KMSS, Z}. For example, Erd\H{o}s' repeated distances conjecture in $\RR^3$ \cite{E60} asserts that $n$ spheres in $\RR^3$ of the same radius must determine $O(n^{4/3})$ tangencies. The current best-known bound is $O(n^{295/197+\eps})$ in $\RR^3$ \cite{Z2}. In Theorem \ref{unitDistancesR3} we will establish the weaker bound $O(n^{3/2})$ which is valid in all fields for which $-1$ is not a square.

Before discussing this problem further, we will introduce some additional terminology. Let $i$ be a solution to $x^2+1=0$ in $\baseField$ and let $\extendedField=\baseField[i]$. Each element $\omega\in \extendedField$ can be written uniquely as $\omega = a + bi,$ with $a,b\in \baseField$. We define the involution $\bar\omega = a-bi$, and we define 
\begin{equation*}
\operatorname{Re}(\omega) = \frac{1}{2}(\omega+\bar\omega),\quad \operatorname{Im}(\omega) = \frac{1}{2}(\omega-\bar\omega).
\end{equation*} 
We define the Heisenberg group
\begin{equation}\label{defnHeisenberg}
\mathbb{H} = \{(x,y,z)\in \extendedField^3\colon \operatorname{Im}(z) = \operatorname{Im}(x\bar y)\}.
\end{equation}
The Heisenberg group contains a four-parameter family of lines. In particular, if $a,b,c,d\in \baseField$, then $\mathbb{H}$ contains the line
\begin{equation}\label{eqnOfLineInH}
\{ (0,c+di,a)+(1,b,c-di)t:\ t\in \extendedField\},
\end{equation}
and every line contained in $\mathbb{H}$ that is not parallel to the $xy$ plane is of this form. If $a_1,b_1,c_1,d_1$ and $a_2,b_2,c_2,d_2$ are elements of $\baseField$, then the corresponding lines are coplanar (i.e. they either intersect or are collinear) precisely when
\begin{equation}\label{linesCollinear}
(c_1 + d_1 i -c_2 - d_2i,\ a_1-a_2)\wedge (b_1-b_2,\ c_1-d_1i - c_2 + d_2i) = 0.
\end{equation}
The Heisenberg group $\mathbb{H}$ has played an important role in studying the Kakeya problem \cite{KLT, KZ, MT}. More recently, it has emerged as an important object in incidence geometry \cite{GuthBook}. See \cite{TaoBlog} (and in particular, the discussion surrounding Proposition 5) for a nice introduction to the Heisenberg group in this context.

Our study of contact problems for spheres in $\baseField^3$ begins by observing that the contact geometry of (oriented) spheres in $\baseField^3$ is isomorphic to the incidence geometry of lines in $\mathbb{H}$ that are not parallel to the $xy$ plane. Concretely, to each oriented sphere $S_1\subset \baseField^3$ centered at $(x_1,y_1,z_1)$ with radius $r_1$, we can associate a line $\ell$ of the form \eqref{eqnOfLineInH}, with $a=-z_1-r_1$, $b = -z_1+r_1$, $c = -x_1$, $d = -y_1$. Two oriented spheres $S_1$ and $S_2$ are in contact if and only if the corresponding lines $\ell_1$ and $\ell_2$ are coplanar. We will discuss this isomorphism and its implications in Section \ref{lieLineSphereSec}. This isomorphism is not new---it is known classically as Lie's line-sphere correspondence (it is also similar in spirit to previous reductions that relate questions in incidence geometry to problems about incidences between lines in three space \cite{Rudnev, deZ, RS} ). However, we are not aware of this isomorphism previously being used in the context of combinatorial geometry.

The isomorphism is interesting for the following reason. In the past decade there has been significant progress in understanding the incidence geometry of lines in $\extendedField^3$. This line of inquiry began with Dvir's proof of the finite field Kakeya problem \cite{dvir} and Guth and Katz's proof of the joints conjecture \cite{GKJoints}, as well as subsequent simplifications and generalizations of their proof by Quilodr\'an and independently by Kaplan, Sharir, and Shustin \cite{KSS}. More recently, and of direct relevance to the problems at hand, Guth and Katz resolved the Erd\H{o}s distinct distances problem in $\RR^2$ (up to the endpoint) by developing new techniques for understanding the incidence geometry of lines in $\RR^3$. Some of these techniques were extended to all fields by Koll\'ar \cite{Kollar} and by Guth and the author \cite{GZ}. The isomorphism described above allows us to translate these results about the incidence geometry of lines into statements about the contact geometry of spheres. We will describe a number of concrete statements below.

To begin exploring the contact geometry of oriented spheres, we should ask: what arrangements of oriented spheres in $\baseField^3$ have many pairs of spheres that are in contact? The next example shows that there are sets of spheres in $\baseField^3$ so that \emph{every} pair of oriented spheres is in contact.

\begin{example}\label{pencilExample}
Let $(x,y,z,r)\in \baseField^4$ and let $(u,v,w)\in \baseField^3$ with $u^2 + v^2 + w^2 = s^2$ for some nonzero $s\in \baseField$. Consider the set of oriented spheres $\mathfrak{P}=\{ (x + ut, y+vt, z+wt, r+st)\colon t\in \baseField\}$. Every pair of spheres from this set is in contact. See Figure \ref{pencilPic}.
\end{example}

\begin{figure}
\centering
\begin{minipage}{.5\textwidth}
  \centering
  \includegraphics[width=0.9\textwidth]{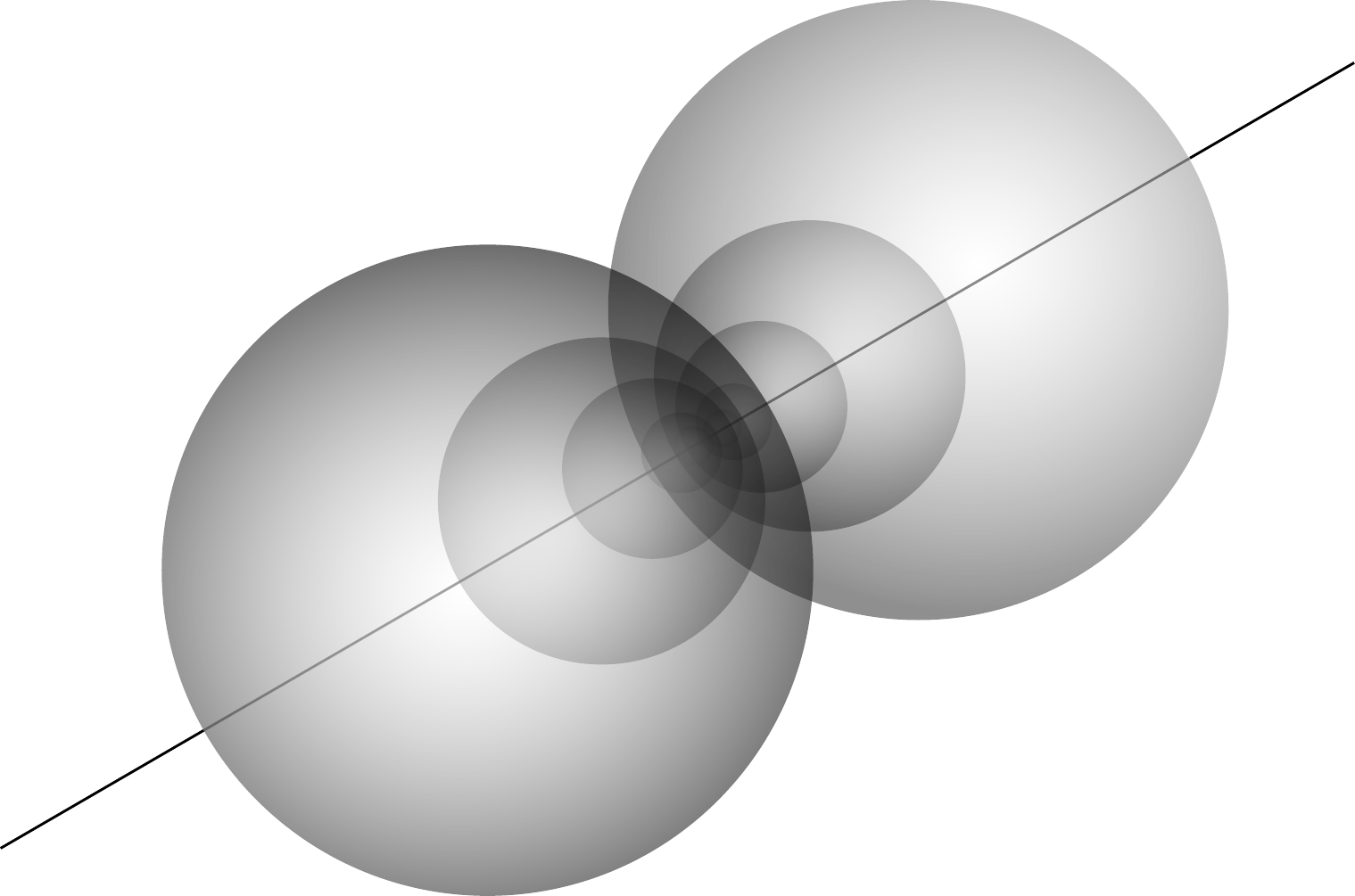}
  \label{fig:test1}
\end{minipage}%
\begin{minipage}{.5\textwidth}
  \centering
  \includegraphics[width=0.9\textwidth]{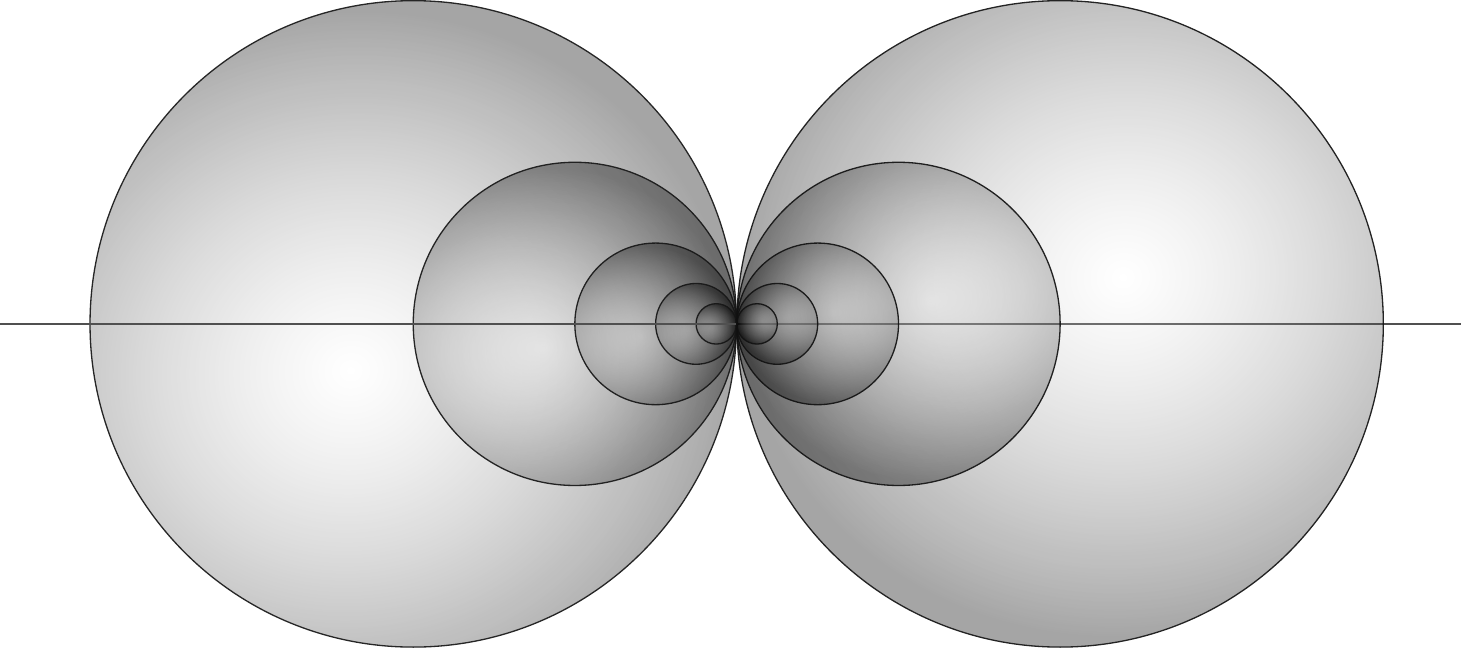}
  \label{fig:test2}
\end{minipage}
\caption{A pencil of contacting spheres. The left and right images are two different perspectives of the same set. Note that there is precisely one sphere of radius 0. If $\baseField=\RR$, then each sphere on one side of the center point has positive radius, while each sphere on the other side has negative radius. }\label{pencilPic}
\end{figure}

\begin{defn}\label{pencilDefn}
Let $\mathfrak{P}$ be a set of spheres, every pair of which is in contact. If $\mathfrak{P}$ is maximal (in the sense that no additional spheres can be added to $\mathfrak{P}$ while maintaining this property), then  $\mathfrak{P}$ is called a ``pencil of contacting spheres.'' If $\mathcal{S}$ is a set of oriented spheres and $\mathfrak{P}$ is a pencil of contacting spheres, we say that $\mathfrak{P}$ is $k$-rich (with respect to $\mathcal{S}$) if at least $k$ spheres from $\mathcal{S}$ are contained in $\mathfrak{P}$. We say $\mathfrak{P}$ is \emph{exactly} $k$-rich if exactly $k$ spheres from $\mathcal{S}$ are contained in $\mathfrak{P}$.
\end{defn}

\begin{rem}
If we identify each sphere in a pencil of contacting spheres with its coordinates $(x,y,z,r)$, then the corresponding points form a line in $\baseField^4$. If we identify each sphere in a pencil of contacting spheres with its corresponding line in $\mathbb{H}$, then the resulting family of lines are all coplanar and pass through a common point (possibly at infinity\footnote{If two lines are parallel, then we say these lines pass through a common point at infinity.}). This will be discussed further in Section \ref{pencilSection}.
\end{rem}

The next result says that any two elements from a pencil of contacting spheres uniquely determine that pencil.
\begin{lem}\label{linearPencilProp}
Let $S_1$ and $S_2$ be distinct oriented spheres that are in contact. Then the set of oriented spheres that are in contact with $S_1$ and $S_2$ is a pencil of contacting spheres. 
\end{lem}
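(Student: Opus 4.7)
My plan is to identify each oriented sphere with its coordinates in $\baseField^4$ and work with the Lorentzian quadratic form $Q(x,y,z,r) = x^2 + y^2 + z^2 - r^2$ with associated bilinear form $B$. The contact condition \eqref{defnOfContact} is precisely $Q(p_1 - p_2) = 0$. Writing $p_1, p_2 \in \baseField^4$ for the coordinates of $S_1, S_2$ and setting $v = p_2 - p_1$, the hypothesis that $S_1, S_2$ are in contact says $Q(v) = 0$, and the set $T$ of spheres in contact with both is the intersection of the two quadric cones $C_i = \{p : Q(p - p_i) = 0\}$ with apices $p_i$. I will show that $T$ equals the line $L = \{p_1 + tv : t \in \baseField\}$ through $p_1$ and $p_2$, and then verify that $T$ is a pencil.

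The first containment $L \subseteq T$ is easy and simultaneously proves that all spheres on $L$ are pairwise in contact: for any two points $p_1 + sv$, $p_1 + tv$ on $L$, their difference $(s-t)v$ satisfies $Q((s-t)v) = (s-t)^2 Q(v) = 0$, so the corresponding spheres are in contact. In particular, every point of $L$ is in contact with $p_1$ and $p_2$.

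The main step is the reverse containment $T \subseteq L$. A point $p = p_1 + u$ lies in $T$ iff $Q(u) = 0$ and $Q(u - v) = 0$; using $Q(v) = 0$, the second condition reduces to $B(u, v) = 0$. So I need to show that every isotropic vector in $v^\perp$ is a scalar multiple of $v$. For this I would invoke Witt's extension theorem to produce an isometry of $(\baseField^4, Q)$ sending $v$ to the standard isotropic vector $e = (1,0,0,1)$. Under this isometry, the condition $u \in v^\perp$ becomes $u_1 = u_4$, and then $Q(u) = 0$ becomes $u_2^2 + u_3^2 = 0$. This is precisely where the assumption that $-1$ is not a square enters: it forces $u_2 = u_3 = 0$, for otherwise $(u_2/u_3)^2 = -1$. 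Reversing the isometry gives $u \in \langle v \rangle$, so $p \in L$.

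Finally, I conclude that $T = L$ is a pencil. Pairwise contact was established above, and maximality is immediate: any sphere in contact with every element of $T$ is, in particular, in contact with $S_1$ and $S_2$, hence already lies in $T$. The key obstacle is the anisotropy step above; without the hypothesis on $-1$, the form $u_2^2 + u_3^2$ would admit nontrivial zeros and $T$ could strictly contain $L$, in which case it would fail to be pairwise contacting and the lemma would break.
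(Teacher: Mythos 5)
Your proof is correct, but it takes a genuinely different route from the paper's. You work directly in the coordinate space $\baseField^4$ with the Lorentzian form $Q(x,y,z,r)=x^2+y^2+z^2-r^2$: contact becomes $Q(p_1-p_2)=0$, the set $T$ of common contacts becomes the isotropic vectors in $v^\perp$ (where $v=p_2-p_1$), and Witt's extension theorem plus the hypothesis that $-1$ is not a square show that the only isotropic line in $v^\perp$ is $\langle v\rangle$, so $T$ is exactly the affine line through $p_1$ and $p_2$. The paper instead first translates spheres into lines in the Heisenberg group $\mathbb{H}\subset\extendedField^3$ via Lie's line--sphere correspondence, and then proves the equivalent statement that any three pairwise-coplanar lines in $\mathbb{H}$ (not parallel to the $xy$-plane) must be coplanar and concurrent, using the two observations that all $\mathbb{H}$-lines through a fixed point lie in the tangent plane $\Pi_{\mathbf w}$, and all $\mathbb{H}$-lines in a fixed direction $\mathbf v$ lie in a fixed plane $\Pi_{\mathbf v}$.

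Your argument is arguably more self-contained and pinpoints the role of the anisotropy hypothesis cleanly: the induced form on $v^\perp/\langle v\rangle$ is $u_2^2+u_3^2$, which is anisotropic precisely because $-1$ is a non-square, and this is exactly what makes $T$ a line rather than a larger cone. It also immediately yields the fact stated in the remark after Definition \ref{pencilDefn} that a pencil's coordinates form a line in $\baseField^4$. The paper's route is less direct but deliberately so: the planes $\Pi_{\mathbf w}$ and $\Pi_{\mathbf v}$ and the ``pairwise coplanar implies concurrent'' fact for $\mathbb{H}$-lines are reused later (e.g.\ in the proofs of Lemma \ref{Coloured2RichPointsInHeisenberg} and Lemma \ref{pointsOnALine}), so the paper's proof of this lemma doubles as setup for the incidence-geometric arguments that follow. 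One minor remark on your write-up: you should state explicitly that $\operatorname{char}\baseField\neq 2$ (which follows from $-1$ being a non-square, since in characteristic $2$ one has $-1=1^2$), as this is needed both to pass from $Q(u)=Q(u-v)=0$ to $B(u,v)=0$ and to invoke Witt's theorem in its standard form.
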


We will defer the proof of Lemma \ref{linearPencilProp} to Section \ref{pencilSection}. Example \ref{pencilExample} suggests that rather than asking how many spheres from $\mathcal{S}$ are in contact, we should instead ask how many $k$-rich pencils can be determined by $\mathcal{S}$---each pencil that is exactly $k$ rich determines $\binom{k}{2}$ pairs of contacting spheres. We begin with the case $k=2$. Since each pair of spheres can determine at most one pencil, a set $\mathcal{S}$ of oriented spheres determines at most $\binom{|\mathcal{S}|}{2}$ 2-rich pencils of contacting spheres. The next example shows that in general we cannot substantially improve this estimate, because there exist configurations of $n$ oriented spheres that determine $\frac{n^2}{4}$  $2$-rich pencils. 

\begin{example}\label{conicExample}
Let $S_1,S_2$ and $S_3$ be three spheres in $\baseField^3$, no two of which are in contact. Suppose that at least three spheres are in contact with each of $S_1,S_2,$ and $S_3$, and denote this set of spheres by $\mathfrak{C}$. Let $\mathfrak{C}^\prime$ be the set of all spheres that are in contact with every sphere from $\mathfrak{C}$. Then every sphere from $\mathfrak{C}$ is in contact with every sphere from $\mathfrak{C}^\prime$ and vice-versa. Furthermore, Lemma \ref{linearPencilProp} implies that no two spheres from $\mathfrak{C}$ are in contact, and no two spheres from $\mathfrak{C}^\prime$ are in contact, so the spheres in $\mathfrak{C}\cup \mathfrak{C}^\prime$ do not determine any 3-rich pencils of contacting spheres. This means that if $\mathcal{S}\subset \mathfrak{C}$ and $\mathcal{S}^\prime\subset \mathfrak{C}^\prime$, then $\mathcal{S}\cup\mathcal{S}^\prime$ determines $|\mathcal{S}|\ |\mathcal{S}^\prime|$ 2-rich pencils of contacting spheres. See Figure \ref{complimentaryConicSectionsPic}.
\end{example}

\begin{defn}\label{conicDefn}
Let $\mathfrak{C}$ and $\mathfrak{C}^\prime$ be two sets of oriented spheres, each of cardinality at least three, with the property that each sphere from $\mathfrak{C}$ is in contact with each sphere from $\mathfrak{C}^\prime$, and no two spheres from the same set are in contact. If $\mathfrak{C}$ and $\mathfrak{C}^\prime$ are maximal (in the sense that no additional spheres can be added to $\mathfrak{C}$ or $\mathfrak{C}^\prime$ while maintaining this property), then $\mathfrak{C}$ and $\mathfrak{C}^\prime$ are called a ``pair of complimentary conic sections.''
\end{defn}

\begin{figure}[h!]
 \centering
\begin{overpic}[width=\textwidth]{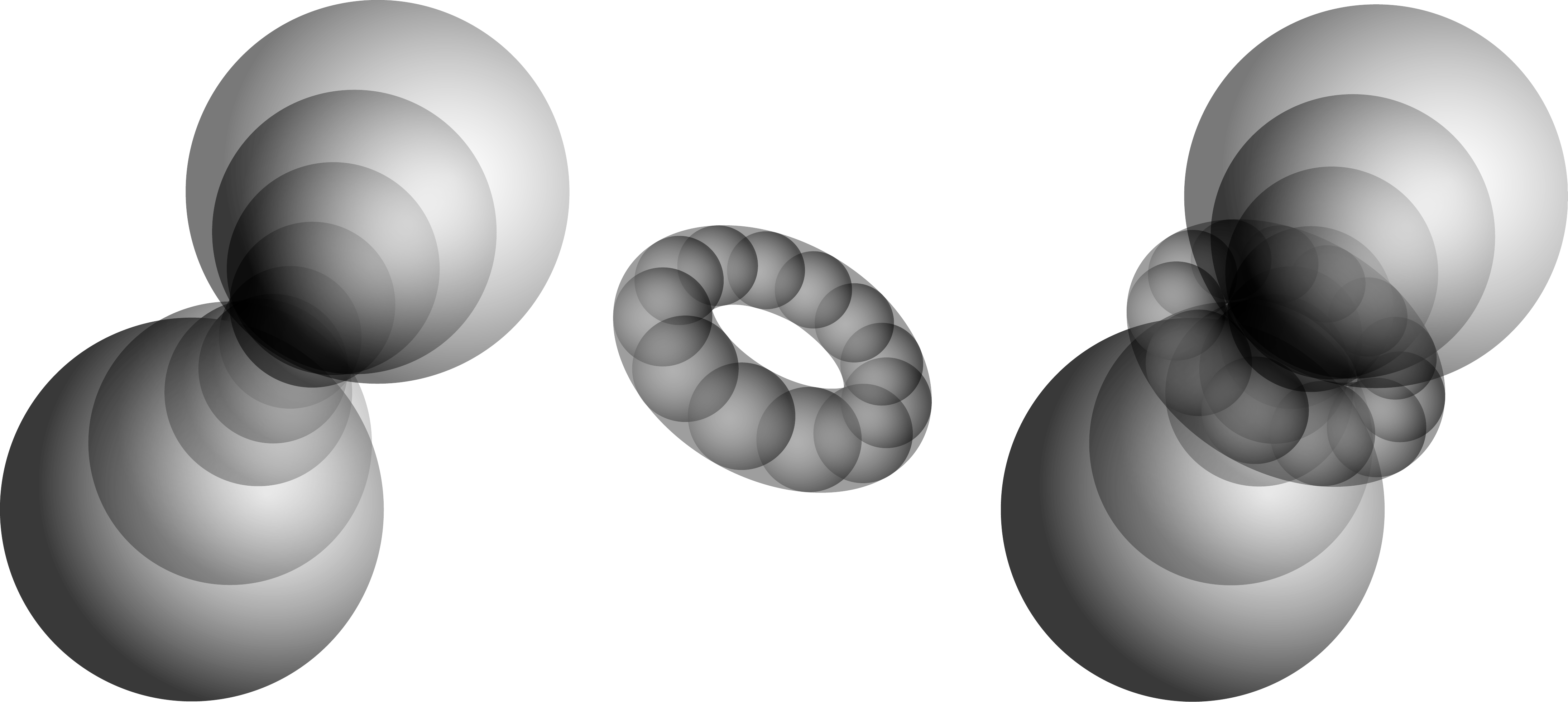}
\end{overpic}
 \caption{A pair of complimentary conic sections. The sets $\mathfrak{C}$ and $\mathfrak{C}^\prime$ are shown individually on the left and center, respectively, and $\mathfrak{C}\cup \mathfrak{C}^\prime$ is shown on the right. If $\baseField=\RR$, then (in this example) all of the spheres in $\mathfrak{C}$ have positive radius, and all of the spheres in $\mathfrak{C}^\prime$ have negative radius. In particular, while certain pairs of spheres in $\mathfrak{C}$ (resp. $\mathfrak{C}^\prime$) are (externally) tangent, no pairs of spheres are in contact in the sense of \eqref{defnOfContact}. Note that while the centers of the spheres in $\mathfrak{C}$ are contained in a line, the corresponding quadruples $(x_1,y_1,z_1,r_1)$ are contained in an irreducible degree-two curve.}\label{complimentaryConicSectionsPic}
\end{figure}

\begin{rem}
If we identify each sphere in a pair of complimentary conic sections with its coordinates $(x,y,z,r)$, then the corresponding points are precisely the $\baseField$-points on a pair of conic sections in $\baseField^4$, neither of which are lines. If we identify each sphere in a pair of complimentary conic sections with its corresponding line in $\mathbb{H}$, then the resulting families of lines are contained in two rulings of a doubly-ruled surface in $\extendedField^3$. Note, however, that the two rulings of this doubly-ruled surface contain additional lines that do not come from the pair of complimentary conic sections, since not all lines in the rulings will be contained in $\mathcal{H}$. This will be discussed further in Section \ref{conicsSection}.
\end{rem}

We are now ready to state our main result. Informally, it asserts that the configurations described in Examples \ref{pencilExample} and \ref{conicExample} are the only way that many pairs of spheres in $\baseField^3$ can be in contact.

\begin{thm}\label{spheresInGeneralFieldThm}
Let $\baseField$ be a field in which $-1$ is not a square. Let $\mathcal{S}$ be a set of $n$ oriented spheres in $\baseField^3$, with $n\leq (\operatorname{char} \baseField)^2$ (if $\baseField$ has characteristic zero then we impose no constraints on $n$). Then for each $3\leq k\leq n$, $\mathcal{S}$ determines $O(n^{3/2}k^{-3/2})$ $k$-rich pencils of contacting spheres. Furthermore, at least one of the following two things must occur:
\begin{itemize}
\item There is a pair of complimentary conic sections $\mathfrak{C},\mathfrak{C}^\prime$ so that 
$$
|\mathfrak{C}\cap \mathcal{S}|\geq \sqrt n\quad\textrm{and}\quad |\mathfrak{C}^\prime\cap \mathcal{S}|\geq \sqrt n.
$$
\item $\mathcal{S}$ determines $O(n^{3/2})$ $2$-rich pencils of contacting spheres.
\end{itemize}
\end{thm}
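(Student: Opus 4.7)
The plan is to translate the theorem via Lie's line-sphere correspondence (Section \ref{lieLineSphereSec}) into a statement about incidences of lines in $\extendedField^3$, and then apply the Guth-Katz line incidence theorem in its extension to general fields. Each sphere $S\in\mathcal{S}$ becomes a line $\ell_S\subset\mathbb{H}\subset\extendedField^3$, so $\mathcal{S}$ becomes a set $\mathcal{L}$ of $n$ lines, and contact between two spheres is the same as coplanarity of the two corresponding lines. A property of $\mathbb{H}$ to be verified in Section \ref{pencilSection} (and alluded to in the remark after Definition \ref{pencilDefn}) is that all lines of $\mathbb{H}$ through a given point of $\mathbb{H}$ lie in a common plane. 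Consequently, a $k$-rich pencil of contacting spheres corresponds bijectively to a (possibly projective) point of $\extendedField^3$ through which at least $k$ lines of $\mathcal{L}$ pass, and the problem reduces to counting $k$-rich points for $\mathcal{L}$.

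\textbf{Main tool.} The main ingredient is the Guth-Katz line incidence theorem in three dimensions, in its extension to arbitrary fields $\extendedField$ of sufficiently large characteristic due to Koll\'ar \cite{Kollar} and to Guth and the author \cite{GZ}. The constraint $n\le(\operatorname{char}\baseField)^2$ is precisely what is needed to invoke these extensions. Under the standard non-degeneracy hypothesis that no plane and no regulus in $\extendedField^3$ contains more than $\sqrt{n}$ lines of $\mathcal{L}$, this theorem produces the bound $O(n^{3/2}/k^2+n/k)$ on the number of $k$-rich points, which is $O(n^{3/2}k^{-3/2})$ for all $1\le k\le n$, together with the $O(n^{3/2})$ bound on pairs of intersecting lines (i.e., $2$-rich pencils). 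This yields Part A of the theorem in the non-degenerate case, and (combined with a simple handling of the degenerate cases described below) always.

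\textbf{Dichotomy.} The second half of the theorem is obtained by analyzing the two ways in which the non-degeneracy hypotheses can fail. If a plane in $\extendedField^3$ contains $\ge\sqrt{n}$ lines of $\mathcal{L}$, then those lines are pairwise coplanar and hence correspond to spheres lying in a common pencil of contacting spheres; at most $O(n)$ such pencils arise in this way, which is absorbed into the $O(n^{3/2})$ bound. If a ruling of a doubly-ruled quadric $Q\subset\extendedField^3$ contains $\ge\sqrt{n}$ lines of $\mathcal{L}$, then, via the analysis to be carried out in Section \ref{conicsSection}, the corresponding spheres form one half $\mathfrak{C}$ of a pair of complimentary conic sections, with the opposite ruling of $Q$ supplying the other half $\mathfrak{C}^\prime$. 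If both halves contain $\ge\sqrt{n}$ spheres of $\mathcal{S}$, we are in the first alternative of the theorem. Otherwise one ruling of $Q$ carries fewer than $\sqrt{n}$ lines of $\mathcal{L}$, and since lines in a common ruling are pairwise skew, the $2$-rich pencils coming from $Q$ are just pairs of intersecting lines drawn one from each ruling, numbering at most $n\cdot\sqrt{n}=n^{3/2}$; removing the overfull ruling then restores the non-degeneracy hypotheses for the remaining lines.

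\textbf{Main obstacle.} The most delicate part is the classification in Section \ref{conicsSection}: identifying which doubly-ruled quadrics in $\extendedField^3$ can contain many lines of $\mathbb{H}$ in a single ruling, and verifying that those lines together with the lines in the opposite ruling correspond via Lie's map to the two halves of a pair of complimentary conic sections in the sense of Definition \ref{conicDefn}. In particular one must check that only the ``conic-section'' spheres yield lines on the two rulings, so that the bijection between rulings of such a $Q$ and $(\mathfrak{C},\mathfrak{C}^\prime)$ is faithful. Once this classification (together with the analogous but easier identification of planar pencils with pencils of contacting spheres) is in place, the theorem follows from a routine combination of Lie's dictionary with the Guth-Zahl incidence theorem.
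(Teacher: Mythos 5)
Your high-level strategy (transfer via Lie's line--sphere correspondence, observe that $k$-rich pencils become $k$-rich points for the line family $\mathcal{L}\subset\mathbb{H}$, then invoke the line-incidence machinery of Guth--Katz / Koll\'ar / Guth--Zahl) is exactly the paper's approach. However there are two substantive gaps.

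First, the bound $O(n^{3/2}/k^2 + n/k)$ you attribute to ``the Guth--Katz line incidence theorem in its extension to general fields'' is not available: over $\RR$ this is Guth--Katz, but the field-agnostic results in \cite{Kollar, GZ} are structure theorems for \emph{2-rich} points only (the paper quotes precisely this as Theorem \ref{2RichManyLinesInPlaneOrRegulus}). The paper obtains its $k$-rich estimate $O(n^{3/2}k^{-3/2})$ --- note the weaker exponent $-3/2$ rather than $-2$, which is all the theorem claims --- by a random-sampling trick: sample $\mathcal{L}$ with probability $\sim 1/k$, turning $k$-rich points into $3$-rich points of a subfamily of size $\sim n/k$, then apply the $2$-rich structure theorem to that subfamily. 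Your argument as written relies on a statement that has not been proven in this generality.

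Second, and more importantly, the paper's $k$-rich bound holds \emph{without any non-degeneracy hypothesis}, and the way it achieves this is by exploiting special features of lines in $\mathbb{H}$ rather than by excising overfull planes and reguli. Your excision scheme has the standard flaw that a point can be $k$-rich for the union $\mathcal{L}$ without being $k$-rich for either the excised lines or the surviving lines, and you do not control the incidences between the two halves. The paper avoids this entirely: (i) by Lemma \ref{linearPencilProp}, coplanar lines in $\mathbb{H}$ are concurrent, so after a pruning step that forces every surviving line to carry $\ge 2A$ bichromatic $2$-rich points, an overfull plane would force the outside lines to meet it in $>n$ distinct points, a contradiction --- this is how the plane alternative in Theorem \ref{2RichManyLinesInPlaneOrRegulus} is eliminated, not absorbed; (ii) for $k\ge 3$, if a doubly-ruled surface $Z$ carried too many lines each with many $3$-rich points, then since $Z$ is doubly but not triply ruled, every $3$-rich point on $Z$ must be met by a line not in $Z$, and such lines cross $Z$ at most twice, again a contradiction. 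These two $\mathbb{H}$-specific arguments are what let the paper state the $k$-rich bound unconditionally and reduce the dichotomy to the single doubly-ruled alternative. Your sketch would need both the sampling reduction and these $\mathbb{H}$-structural lemmas to be made correct.

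The portion of your plan concerning the dictionary between rulings of a doubly-ruled quadric and pairs of complimentary conic sections (the content of Section \ref{conicsSection}) is correctly identified and matches the paper.
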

\noindent Theorem \ref{spheresInGeneralFieldThm} leads to new bounds for Erd\H{o}s' repeated distances problem in $\baseField^3$.

\begin{thm}\label{unitDistancesR3}
Let $\baseField$ be a field in which $-1$ is not a square. Let $r\in \baseField\backslash \{0\}$ and let $\mathcal{P}\subset \baseField^3$ be a set of $n$ points in $\baseField^3$, with $n\leq (\operatorname{char} \baseField)^2$ (if $\baseField$ has characteristic zero then we impose no constraints on $n$). Then there are $O(n^{3/2})$ pairs $(x_1,y_1,z_1),(x_2,y_2,z_2)\in \mathcal{P}$ satisfying 
\begin{equation}\label{distanceR}
(x_1-x_2)^2 + (y_1-y_2)^2 + (z_1-z_2)^2 = r^2.
\end{equation} 
\end{thm}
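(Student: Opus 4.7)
The plan is to reduce the question to one about sphere contacts and apply Theorem~\ref{spheresInGeneralFieldThm}. For each point $p\in\mathcal{P}$, introduce two oriented spheres centered at $p$: $S_p^+$ of signed radius $+r/2$, and $S_p^-$ of signed radius $-r/2$. Let $\mathcal{S}=\{S_p^+,S_p^-:p\in\mathcal{P}\}$, which has cardinality $2n$. A direct computation from~\eqref{defnOfContact} shows that $S_{p_1}^+$ and $S_{p_2}^-$ are in contact exactly when $|p_1-p_2|^2=r^2$, so every pair of points in $\mathcal{P}$ satisfying~\eqref{distanceR} produces at least one contacting pair in $\mathcal{S}$; it suffices to bound the number of contacting pairs in $\mathcal{S}$ by $O(n^{3/2})$.

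The crucial observation is that each pencil of contacting spheres meets $\mathcal{S}$ in at most two spheres. Indeed, a pencil is a line in $\baseField^4$ of the form given in Example~\ref{pencilExample}, along which the signed-radius coordinate varies linearly at nonzero rate $s$; hence for each of the two signed radii $\pm r/2$ used in $\mathcal{S}$, at most one sphere of $\mathcal{S}$ sits in the pencil. Consequently every $2$-rich pencil determined by $\mathcal{S}$ is exactly $2$-rich, and by Lemma~\ref{linearPencilProp} the number of contacting pairs in $\mathcal{S}$ equals the number of $2$-rich pencils it determines.

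Applying Theorem~\ref{spheresInGeneralFieldThm} to $\mathcal{S}$, either the number of $2$-rich pencils is $O(n^{3/2})$ (which gives the desired bound), or there is a pair of complimentary conic sections $\mathfrak{C},\mathfrak{C}'$ each containing at least $\sqrt{2n}$ spheres of $\mathcal{S}$. The main obstacle is ruling out this second alternative. The points of $\mathcal{S}$ in $\baseField^4$ all have last coordinate $\pm r/2$, so they lie in the union of the two hyperplanes $H_\pm=\{r=\pm r/2\}$. Since $\mathfrak{C}$ is a degree-two curve in $\baseField^4$ that is not a line, B\'ezout's theorem implies that it either lies entirely in some $H_\epsilon$ or meets each hyperplane in at most two points; the lower bound $|\mathfrak{C}\cap\mathcal{S}|\geq\sqrt{2n}$ forces the former whenever $n$ exceeds a small constant, and likewise for $\mathfrak{C}'$. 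Projecting to $\baseField^3$, one obtains non-line conics $C_1,C_2\subset\baseField^3$ each containing at least $\sqrt{2n}$ $\baseField$-points of $\mathcal{P}$, with the contact condition forcing $|p-q|^2=c$ for every $(p,q)\in(C_1\cap\mathcal{P})\times(C_2\cap\mathcal{P})$, where $c\in\{0,r^2\}$ is determined by which hyperplanes $\mathfrak{C}$ and $\mathfrak{C}'$ lie in.

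To finish, I would derive a contradiction by choosing three non-collinear $\baseField$-points $q_1,q_2,q_3\in C_2\cap\mathcal{P}$; such a choice exists because $C_2$ is a non-line conic containing at least $\sqrt{2n}\geq 3$ points (if $C_2$ is irreducible any three distinct points suffice by B\'ezout, while the reducible case reduces to a pair of lines, and the subcase where all the $\mathcal{P}$-points lie on a single one of them can be excluded using the non-contact condition within $\mathfrak{C}'$). Then $C_1\cap\mathcal{P}$ is contained in the intersection $\bigcap_{i=1}^3\{p:|p-q_i|^2=c\}$, which is a zero-dimensional variety whose $\baseField$-points are bounded by an absolute constant via B\'ezout. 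This contradicts $|C_1\cap\mathcal{P}|\geq\sqrt{2n}$ once $n$ exceeds an absolute constant, and small $n$ is absorbed by the implicit constant in $O(n^{3/2})$; the hypothesis $n\leq(\operatorname{char}\baseField)^2$ ensures B\'ezout yields meaningful intersection bounds in positive characteristic.
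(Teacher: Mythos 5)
Your reduction is genuinely different from the paper's, and it contains a gap that is fatal over general fields. The paper associates to each point a sphere of radius $0$ (red) and, to each point, a sphere of radius $r$ (blue), and then invokes the \emph{coloured} incidence bound, Lemma~\ref{Coloured2RichPointsInHeisenberg}, to count only \emph{bichromatic} $2$-rich points. This is essential. You instead build $\mathcal{S}=\{S_p^{\pm r/2}\}$ and claim it suffices to bound \emph{all} contacting pairs in $\mathcal{S}$ by $O(n^{3/2})$. That is false in general: a pair $S_{p_1}^{+},S_{p_2}^{+}$ (same signed radius) is in contact precisely when $|p_1-p_2|^2=0$, and over any field with an isotropic vector in $\baseField^3$ (which exists whenever $1+a^2+b^2=0$ is solvable, e.g.\ $\FF_p$ for odd $p$, including many $p\equiv 3\!\!\pmod 4$) one can place $n$ points on an isotropic line so that \emph{every} pair satisfies $|p_i-p_j|^2=0$. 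Then $\mathcal{S}$ has $\Theta(n^2)$ contacting pairs, so your target bound on contacting pairs is unattainable, while the number of distance-$r$ pairs is $0$.

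The error enters at the "crucial observation": you assert that every pencil is of the form in Example~\ref{pencilExample}, so that the radius coordinate varies at a nonzero rate $s$ and each pencil meets $\mathcal{S}$ in at most two spheres. But Remark~\ref{allSameDifferentRadiiRem} explicitly records that pencils come in two types: those with all radii distinct \emph{and} those with all radii equal. The constant-radius pencils (which arise from points $\mathbf{w}\in\mathbb{H}$ with $(\operatorname{Re}x_1)^2+(\operatorname{Im}x_1)^2=1$, equivalently isotropic lines of centers in $\baseField^3$) can contain arbitrarily many spheres of $\mathcal{S}$ of the same sign, and these are exactly the monochromatic contacts you fail to exclude. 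Over $\RR$ the problem disappears because $\RR^3$ is anisotropic, but the theorem claims the bound over all fields in which $-1$ is not a square. A related secondary issue occurs in your treatment of the conic alternative: if both $\mathfrak{C}$ and $\mathfrak{C}'$ lie in the same hyperplane $H_\epsilon$, the constant you call $c$ equals $0$, and the final B\'ezout/finiteness step breaks because $\{p:|p-q|^2=0\}$ is a cone that may contain lines, so the triple intersection need not be zero-dimensional. To repair the argument you would need to count only bichromatic contacts from the outset; Theorem~\ref{spheresInGeneralFieldThm} as stated does not do this, which is why the paper proves and applies the coloured Lemma~\ref{Coloured2RichPointsInHeisenberg} together with an explicit $K_{3,3}$-exclusion using the fact that a sphere of nonzero radius contains no lines.
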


As discussed above, when $F=\RR$ the conjectured bound is $O(n^{4/3})$ and the current best known bound is $O(n^{\frac{3}{2}-\frac{1}{394}+\eps})$ \cite{Z2}. For general fields in which $-1$ is not a square, the previous best-known bound was $O(n^{5/3})$. 

\begin{rem}\label{noImprovement}
The bound $O(n^{3/2})$ given above cannot be improved without further restrictions on $n$. Indeed, if we select $\mathcal{P}$ to be a set of $p^2$ points in $\mathbb{F}_p^3$, then by pigeonholing there exists an element $r\in \mathbb{F}_p\backslash\{0\}$ so that there are\footnote{Some care has to be taken when selecting the points in $\mathcal{P}$ to ensure that not too many pairs of points satisfy $(x_1-x_2)^2 + (y_1-y_2)^2 + (z_1-z_2)^2 = 0$, but this is easy to achieve.} roughly $ p^3$ solutions to \eqref{distanceR}. When $n$ is much smaller than $p^2$ (e.g. if $n\leq p$) it seems plausible that there should be $O(n^{4/3})$ solutions to \eqref{distanceR}.
\end{rem}

\begin{rem}
The requirement that $-1$ not be a square is essential, since if $-1$ is a square in $\baseField$ then for each $r\in \baseField\backslash \{0\}$, the sphere $\{(x,y,z)\in F^3\colon x^2+y^2+z^2 = r^2\}$ is doubly-ruled by lines (see e.g. \cite[Lemma 6]{Rudnev2}). It is thus possible to find an arrangement of $n/2$ spheres of radius $r$, all of which contain a common line $\ell$. Let $\mathcal{P}=\mathcal{P}_1\cup\mathcal{P}_2$, where $\mathcal{P}_1$ is the set of centers of the $n/2$ spheres described above and $\mathcal{P}_2$ is a set of $n/2$ points on $\ell$. Then $\mathcal{P}$ has $n^2/4$ pairs of points that satisfy \eqref{distanceR}.
\end{rem}

Theorem \ref{unitDistancesR3} can also be used to prove new results for the incidence geometry of points and spheres in $\baseField^3$. In general, it is possible for $n$ points and $n$ spheres in $\baseField^3$ to determine $n^2$ points-sphere incidences. For example, we can place $n$ points on a circle in $\baseField^3$ and select $n$ spheres which contain that circle. The following definition, which is originally due to Elekes and T\'oth \cite{ET} in the context of hyperplanes, will help us quantify the extent to which this type of situation occurs.

\begin{defn}
Let $\baseField$ be a field, let $\mathcal{P}\subset \baseField^3$, and let $\eta>0$ be a real number. A sphere $S\subset \baseField^3$ is said to be $\eta$-non-degenerate (with respect to $\mathcal{P}$) if for each plane $H\subset \baseField^3$ we have $|\mathcal{P}\cap S\cap H|\leq \eta|\mathcal{P}\cap S|$. 
\end{defn}
\begin{thm}[Point-sphere incidences]\label{pointSphereThm}
Let $\baseField$ be a field in which $-1$ is not a square. Let $\mathcal{S}$ be a set of $n$ spheres (of nonzero radius) in $\baseField^3$ and let $\mathcal{P}$ be a set of $n$ points in $\baseField^3$, with $n\leq (\operatorname{char} \baseField)^2$ (if $\baseField$ has characteristic zero then we impose no constraints on $n$). Let $\eta>0$ and suppose the spheres in $\mathcal{S}$ are $\eta$-non-degenerate with respect to $\mathcal{P}$. Then there are $O(n^{3/2})$ incidences between the points in $\mathcal{P}$ and the spheres in $\mathcal{S}$, where the implicit constant depends only on $\eta$. 
\end{thm}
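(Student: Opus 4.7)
The plan is to reduce Theorem \ref{pointSphereThm} to Theorem \ref{spheresInGeneralFieldThm} by identifying each point $p\in\mathcal{P}$ with the oriented sphere of radius zero centered at $p$, and then carefully analyzing the pencils arising from Lemma \ref{linearPencilProp}. Set $\mathcal{S}' = \mathcal{S}\cup\mathcal{P}$, so $|\mathcal{S}'|\leq 2n$. Each point-sphere incidence $(p,S)$ corresponds exactly to a contact between the radius-zero sphere at $p$ and $S$, and by Lemma \ref{linearPencilProp} this contact lies in a unique pencil. Since the spheres in a pencil share a common tangent point, each pencil contains at most one radius-zero sphere. For each pair $(p,\nu)$ consisting of a point $p\in\mathcal{P}$ and an oriented tangent-plane direction $\nu$ at $p$, let $k_{p,\nu}$ denote the number of spheres in $\mathcal{S}$ tangent to direction $\nu$ at $p$; the corresponding pencil is then $(k_{p,\nu}+1)$-rich with respect to $\mathcal{S}'$. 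Writing $N_{\geq k}$ for the number of such pairs with $k_{p,\nu}\geq k$, the total number of incidences is $I = \sum_{k\geq 1} N_{\geq k}$. The $k$-rich pencil bound in Theorem \ref{spheresInGeneralFieldThm} directly gives $N_{\geq k} = O(n^{3/2}k^{-3/2})$ for $k\geq 2$, so $\sum_{k\geq 2} N_{\geq k} = O(n^{3/2})$, and the whole problem reduces to bounding $N_{\geq 1}$, which counts 2-rich pencils containing a radius-zero sphere.

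If $\mathcal{S}'$ falls into the first alternative of Theorem \ref{spheresInGeneralFieldThm}, then the total count of 2-rich pencils is $O(n^{3/2})$, whence $N_{\geq 1} = O(n^{3/2})$ and we are done. Otherwise, there is a pair of complementary conic sections $\mathfrak{C},\mathfrak{C}'$ with $|\mathfrak{C}\cap\mathcal{S}'|,|\mathfrak{C}'\cap\mathcal{S}'|\geq\sqrt{n}$. Here I would invoke the structural description from Section \ref{conicsSection}: since distinct radius-zero spheres are never in contact, at least one of $\mathfrak{C},\mathfrak{C}'$---say $\mathfrak{C}'$---meets $\mathcal{S}$ in $\Omega(\sqrt{n})$ genuine spheres that all pass through a common circle $\gamma\subset\baseField^3$ lying in some plane $\Pi$. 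Write $\mathcal{S}_\gamma = \mathfrak{C}'\cap\mathcal{S}$, $m = |\mathcal{S}_\gamma|$, and $j = |\mathcal{P}\cap\gamma|$. Two distinct spheres in $\mathcal{S}_\gamma$ intersect exactly in $\gamma$, so every point of $\mathcal{P}\setminus\gamma$ lies on at most one sphere in $\mathcal{S}_\gamma$, contributing at most $n$ off-$\gamma$ incidences in total. Applying $\eta$-non-degeneracy to the plane $\Pi$ gives $j\leq \eta\,|\mathcal{P}\cap S|$ for every $S\in\mathcal{S}_\gamma$, and summing this lower bound over $\mathcal{S}_\gamma$ yields
$$
\tfrac{mj}{\eta}\leq \sum_{S\in\mathcal{S}_\gamma}|\mathcal{P}\cap S| = mj + \sum_{S\in\mathcal{S}_\gamma}|\mathcal{P}\cap(S\setminus\gamma)| \leq mj + n,
$$
so $mj = O_\eta(n)$ and the total incidences involving $\mathcal{S}_\gamma$ are $O_\eta(n)$.

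With this bound in hand, I would iterate: remove $\mathcal{S}_\gamma$ from $\mathcal{S}$ (keeping $\mathcal{P}$ intact, so the non-degeneracy of the surviving spheres is preserved) and reapply Theorem \ref{spheresInGeneralFieldThm}. Each iteration deletes $\Omega(\sqrt{n})$ spheres at a cost of $O_\eta(n)$ incidences, so after at most $O(\sqrt{n})$ iterations we are forced into the first alternative of Theorem \ref{spheresInGeneralFieldThm}, and the remaining incidences are bounded by $O(n^{3/2})$. Summing, $I = O(\sqrt{n})\cdot O_\eta(n) + O(n^{3/2}) = O_\eta(n^{3/2})$. The main obstacle is the geometric identification in the complementary conics case: I must know that a ``conic of spheres'' in $\baseField^4$ genuinely corresponds to a family of spheres passing through a common circle $\gamma$ in $\baseField^3$, so that the key fact that two distinct such spheres intersect only in $\gamma$ is available. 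This should follow from the discussion of pairs of complementary conic sections in Section \ref{conicsSection} (and the Lie correspondence with rulings of doubly-ruled surfaces), but it is the step on which the entire pruning argument rests.
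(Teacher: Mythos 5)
Your overall scaffold (count incidences via pencils, dispose of the 3-and-richer pencils with the $O(n^{3/2}k^{-3/2})$ bound, reduce the remaining problem to bounding 2-rich point-sphere pencils, and handle the complementary-conics obstruction via $\eta$-non-degeneracy) is close in spirit to the paper's, but the key step you flag as ``the step on which the entire pruning argument rests'' is in fact a genuine gap. The spheres in $\mathfrak{C}'\cap\mathcal{S}$ do \emph{not} in general pass through a common circle. That would require the complementary conic $\mathfrak{C}$ to contain at least three radius-zero spheres: if $p_1,p_2,p_3\in\mathfrak{C}$ have radius zero, then every sphere in $\mathfrak{C}'$ passes through $p_1,p_2,p_3$ and hence through the circle they determine. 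But Theorem \ref{spheresInGeneralFieldThm} only guarantees $|\mathfrak{C}\cap\mathcal{S}'|\geq\sqrt n$, and these $\sqrt n$ elements may all be spheres from $\mathcal{S}$ with nonzero radius (this is exactly the situation drawn in Figure \ref{complimentaryConicSectionsPic}, where all spheres in $\mathfrak{C}$ have positive radius and the members of $\mathfrak{C}'$ have no common circle). Without the common circle, two distinct spheres of $\mathcal{S}_\gamma$ need not intersect in a fixed curve, so the ``at most $n$ off-$\gamma$ incidences'' bound and the $\eta$-non-degeneracy computation that follows both collapse, and the pruning step has no incidence bound to charge against it.

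The paper avoids this by applying the two-coloured structural statement, Lemma \ref{Coloured2RichPointsInHeisenberg}, with the lines coming from $\mathcal{P}$ coloured red and those from $\mathcal{S}$ coloured blue, and with the threshold set to $A=\lceil 2\eta^{-1}\sqrt n\rceil$. That lemma produces a doubly-ruled surface with at least $A$ \emph{red} lines in one ruling and at least $A$ \emph{blue} lines in the other. Because one ruling is forced to be populated by red lines, the corresponding conic $\mathfrak{C}$ really does consist of radius-zero spheres; their $(x,y,z,0)$ coordinates lie on a plane conic, so the incident points on each blue sphere $S$ lie on a common plane $H$. The pigeonhole count in the paper (each red line not contained in the surface meets it at most twice) then exhibits a blue sphere $S$ with $|\mathcal{P}\cap S\cap H| > \eta|\mathcal{P}\cap S|$, contradicting $\eta$-non-degeneracy. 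In short, the bichromatic lemma is what guarantees the radius-zero structure you need, and bypasses the iteration entirely. If you want to keep your pruning architecture, you would need an argument that handles the case $|\mathfrak{C}\cap\mathcal{P}|<3$ separately (e.g.\ by noting such conic pairs contribute few point-sphere pencils), but as written the proposal does not establish the stated bound.
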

In \cite{ApfelbaumSharir},  Apfelbaum and Sharir proved that $m$ points and $n$ $\eta$-non-degenerate spheres in $\RR^3$ determine $O^*(m^{8/11}n^{9/11}+mn^{1/2})$ incidences, where the notation $O^*(\cdot)$ suppresses sub-polynomial factors. When $m=n$, this bound simplifies to $O^*(n^{17/11})$. Thus in the special case $m=n$, Theorem \ref{pointSphereThm} both strengthens the incidence bound of  Apfelbaum and Sharir and extends the result from $\RR$ to fields in which $-1$ is not a square. A construction analogous to the one from Remark \ref{noImprovement} show that the bound $O(n^{3/2})$ cannot be improved unless we impose additional constraints on $n$.

When $\baseField=\RR$, additional tools from incidence geometry become available, and we can say more.
\begin{thm}\label{spheresInR3Thm}
Let $\mathcal{S}$ a set of $n$ oriented spheres in $\RR^3$. Then for each $3\leq k\leq n$, $\mathcal{S}$ determines $O(n^{3/2}k^{-5/2}+nk^{-1})$ $k$-rich pencils of contacting spheres.
\end{thm}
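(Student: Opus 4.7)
The plan is to translate the problem to a question about line incidences via Lie's correspondence and then invoke the Guth--Katz flat-points theorem, which is special to $\RR^3$ and improves the exponent from $k^{-3/2}$ (the general-field bound of Theorem \ref{spheresInGeneralFieldThm}) to $k^{-5/2}$.

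First, I identify each sphere in $\mathcal{S}$ with its corresponding line of the form \eqref{eqnOfLineInH} in $\mathbb{H} \subset \CC^3$, obtaining a collection $\mathcal{L}$ of $n$ lines. By Lie's correspondence and Lemma \ref{linearPencilProp}, a $k$-rich pencil of contacting spheres corresponds precisely to a point of $\CC^3$ through which at least $k$ lines of $\mathcal{L}$ pass, with all of these lines lying in a common (complex) plane through the point. In Guth--Katz terminology, this is a \emph{flat} $k$-rich point of the line arrangement, so the theorem reduces to bounding flat $k$-rich points of $\mathcal{L}$.

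Next, I apply the Guth--Katz flat-points bound: for $L$ lines in $\RR^3$, no more than $\sqrt L$ of which lie in any common plane or doubly-ruled surface, the number of flat $k$-rich points is $O(L^{3/2}/k^{5/2})$. Since $\mathcal{L}$ lives in $\CC^3$ rather than $\RR^3$, this result does not apply directly. However, $\mathcal{L}$ carries considerable real structure: each line is determined by a real quadruple $(a,b,c,d) \in \RR^4$, and the coplanarity condition \eqref{linesCollinear} is a pair of real polynomial equations in these parameters. I would therefore carry out the Guth--Katz argument---polynomial partitioning and critical-surface analysis---in the real four-dimensional parameter space of lines in $\mathbb{H}$, with ordinary intersection in $\RR^3$ replaced by coplanarity of Heisenberg lines.

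To handle the non-degeneracy hypothesis, I use the classifications from Sections \ref{pencilSection} and \ref{conicsSection}: many lines sharing a plane in $\mathbb{H}$ correspond to many spheres in a common pencil (contributing only a single $k$-rich pencil), and many lines on a doubly-ruled surface correspond to a pair of complementary conic sections (which by Example \ref{conicExample} produce only $2$-rich pencils, irrelevant for $k \ge 3$). If such configurations contain more than $\sqrt n$ lines, I peel them off, bound their contribution by the trivial $O(n/k)$ estimate (absorbed into the $nk^{-1}$ term), and apply the flat-points bound to the remainder. The main obstacle lies in the second step: the sharp $k^{-5/2}$ flat-points bound is a genuinely real phenomenon---the extensions of Guth--Katz to arbitrary fields yield only the $k^{-2}$ exponent---so obtaining it in the complex Heisenberg setting requires carefully translating incidence, coplanarity, and doubly-ruled structure into the real parameter space $\RR^4$ and verifying that the Guth--Katz polynomial partitioning argument survives this transfer.
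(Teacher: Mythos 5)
Your high-level plan—translate to Heisenberg lines, work in the real parameter space, peel off degenerate planar/regulus configurations and absorb them into $O(n/k)$—matches the spirit of the paper. But the central tool you invoke does not exist in the form you state, and this is a genuine gap rather than a detail.

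You cite a ``Guth--Katz flat-points bound'' of $O(L^{3/2}/k^{5/2})$ for flat $k$-rich points. The Guth--Katz $k$-rich point theorem (flat or not) gives $O(L^{3/2}/k^{2})$; the Elekes--Kaplan--Sharir flat-points result gives $O(L^{3/2})$ for $3$-rich flat points, and random sampling upgrades that only to $O(L^{3/2}/k^{3/2})$. None of these produce $k^{-5/2}$. Moreover, the observation that pencils correspond to flat points buys you nothing here: by \eqref{tangentPlaneH}, every line of $\mathbb{H}$ through a point $\mathbf w$ lies in the tangent plane $\Pi_{\mathbf w}$, so \emph{every} $k$-rich point of a Heisenberg line arrangement is automatically flat. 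Flatness is a tautology of the Heisenberg structure, not a usable extra constraint.

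The exponent $k^{-5/2}$ in the theorem is a consequence of a Szemer\'edi--Trotter-type incidence bound, Proposition \ref{ptsLinesInHProp}: $I(\mathcal{P},\mathcal{L}) = O(m^{3/5}n^{3/5}+m+n)$ for $m$ points and $n$ Heisenberg lines. Once that bound is in hand, the theorem is a two-line deduction (if there are $m$ $k$-rich points then $km \le I(\mathcal{P},\mathcal{L}) = O(m^{3/5}n^{3/5}+m+n)$, so $m = O(n^{3/2}k^{-5/2}+nk^{-1})$ for $k$ large, and for $k$ small one falls back on Theorem \ref{spheresInGeneralFieldThm}). Proposition \ref{ptsLinesInHProp} itself is the real work: it dualizes by sending each Heisenberg line to a point $\mathbf q_\ell \in \RR^4$ and each Heisenberg point to a line $W_{\mathbf p}\subset\RR^4$ via \eqref{linesThruAPt}, projects generically to $\RR^3$, applies polynomial partitioning (Corollary \ref{partitionPtsAndLines}), controls the zero set via the algebraic structure lemma (Lemma \ref{boundaryBehavior}) and the collinearity lemma (Lemma \ref{pointsOnALine}), and closes with induction on $m$ in the style of Sharir--Zlydenko. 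The crucial preliminary input is Lemma \ref{preliminaryPtLineBd}, $I = O(n^{3/2}+m)$, which comes from the general-field $3$-rich point bound; the balance $n^{3/2}D^{-3/2} = mD$ is exactly what produces the $3/5$ exponent. Your proposal skips this entire inductive incidence-theoretic step and asserts the final count directly; you would need to supply a proof of something equivalent to Proposition \ref{ptsLinesInHProp} for the argument to go through.
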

Note that a $k$-rich pencil determines $\binom{k}{2}$ pairs of contacting spheres. Since the quantity $\binom{k}{2} n^{3/2}k^{-5/2}$ is dyadically summable in $k$, Theorem \ref{spheresInR3Thm} allows us to bound the number of pairs of contacting spheres, provided not too many spheres are contained in a common pencil or pair of complimentary conic sections.

\begin{cor}\label{numberOfSphereContacts}
Let $\mathcal{S}$ a set of $n$ oriented spheres in $\RR^3$. Suppose that no pencil of contacting spheres is $\sqrt n$ rich. Then at least one of the following two things must occur
\begin{itemize}
\item There is a pair of complimentary conic sections $\mathfrak{C},\mathfrak{C}^\prime$ so that 
\begin{equation}\label{pairOfCompConicSectoins}
|\mathfrak{C}\cap \mathcal{S}|\geq \sqrt n\quad\textrm{and}\quad |\mathfrak{C}^\prime\cap \mathcal{S}|\geq \sqrt n.
\end{equation}
\item There are $O(n^{3/2})$ pairs of contacting spheres.
\end{itemize}
\end{cor}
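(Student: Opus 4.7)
The plan is to decompose the count of contacting pairs by the pencil each pair determines, apply Theorem \ref{spheresInR3Thm} to handle pencils that are at least $3$-rich, and apply the dichotomy in Theorem \ref{spheresInGeneralFieldThm} to handle the $2$-rich case. By Lemma \ref{linearPencilProp}, any two distinct contacting spheres lie in a unique pencil of contacting spheres, so
\begin{equation*}
\#\{\text{pairs of contacting spheres in }\mathcal{S}\} \;=\; \sum_{\mathfrak{P}} \binom{|\mathfrak{P}\cap \mathcal{S}|}{2},
\end{equation*}
where the sum runs over pencils containing at least two spheres from $\mathcal{S}$.

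First I would bound the contribution from pencils that are at least $3$-rich. The hypothesis that no pencil is $\sqrt{n}$-rich means every such pencil satisfies $3 \le |\mathfrak{P}\cap \mathcal{S}| < \sqrt{n}$, so it suffices to estimate $\sum_{j} 2^{2j}\, P_{2^j}$ over dyadic $2^j \in [3,\sqrt{n}]$, where $P_k$ denotes the number of $k$-rich pencils. Theorem \ref{spheresInR3Thm} gives $k^2 P_k = O(n^{3/2}k^{-1/2} + nk)$. The geometric sum of the first term over the dyadic range is dominated at the bottom and contributes $O(n^{3/2})$; the geometric sum of the second term is dominated at the top and contributes $O(n \cdot \sqrt{n}) = O(n^{3/2})$. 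Thus the total contribution from pencils of richness at least $3$ is $O(n^{3/2})$.

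Next I would handle $2$-rich pencils, each of which contributes exactly one contacting pair. Here I apply Theorem \ref{spheresInGeneralFieldThm}: either there is a pair of complimentary conic sections $\mathfrak{C},\mathfrak{C}^\prime$ with $|\mathfrak{C}\cap \mathcal{S}|, |\mathfrak{C}^\prime \cap \mathcal{S}| \ge \sqrt{n}$, in which case the first alternative of the corollary holds and we are done, or $\mathcal{S}$ determines $O(n^{3/2})$ $2$-rich pencils. In the latter case these contribute $O(n^{3/2})$ pairs, which combined with the previous paragraph yields $O(n^{3/2})$ pairs of contacting spheres in total. There is no real conceptual obstacle once Theorems \ref{spheresInR3Thm} and \ref{spheresInGeneralFieldThm} are in hand; the only point to be careful about is that the cutoff $k < \sqrt{n}$ furnished by the hypothesis is precisely the threshold beyond which the $nk$ term in Theorem \ref{spheresInR3Thm} would drive the sum past $n^{3/2}$, so the hypothesis on pencil richness is being used in an essentially sharp way.
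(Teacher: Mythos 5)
Your argument is correct and matches the approach the paper intends: the paper's justification for Corollary \ref{numberOfSphereContacts} is just the one-sentence remark preceding it (dyadic summation of $\binom{k}{2}$ times the $k$-rich pencil count from Theorem \ref{spheresInR3Thm}, with the $\sqrt{n}$ pencil hypothesis cutting off the $nk^{-1}$ tail and the dichotomy from Theorem \ref{spheresInGeneralFieldThm} handling the $2$-rich contribution), and your write-up simply fills in those details. The only nitpick is that in the final paragraph you should say ``exactly $2$-rich'' pencils contribute one pair each (the $\geq 3$-rich ones have already been counted), but this does not affect the argument since the count of exactly $2$-rich pencils is dominated by the count of $2$-rich pencils.
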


The following (rather uninteresting) example shows that Theorem \ref{spheresInR3Thm} can be sharp when there are pairs of complimentary conic sections that contain almost $\sqrt n$ spheres from $\mathcal{S}$.
\begin{example}
Let $n = 2m^2$ and let $\mathcal{S}$ be a disjoint union $\mathcal{S} = \bigsqcup_{j=1}^m(\mathcal{S}_j\cup \mathcal{S}_j^\prime)$, where for each index $j$, each of $\mathcal{S}_j$ and $\mathcal{S}_j^\prime$ is a set of $m$ spheres contained in complimentary conic sections. The spheres in $\mathcal{S}$ determine $m^3=(n/2)^{3/2}$ $2$-rich pencils, and no pair of complimentary conic sections satisfy \eqref{pairOfCompConicSectoins}. 
\end{example} 
A similar example shows that Theorem \ref{spheresInR3Thm} can be sharp when there are pencils that contain almost $\sqrt n$ spheres from $\mathcal{S}$. More interestingly, the following ``grid'' construction shows that Theorem \ref{spheresInR3Thm} can be sharp even when $\mathcal{S}$ does not contain many spheres in a pencil or many spheres in complimentary conic sections. 
\begin{example}\label{gridExample}
Let $n = m^4$, let $[m]=\{1,\ldots,m\}$, and let $\mathcal{S}$ consist of all oriented spheres centered at $(a,b,c)$ of radius $d$, where $a,b,c,d \in [m]$. We can verify that the equation
\begin{equation}\label{tangencyIntegerPoints}
(a-a^\prime)^2 + (b-b^\prime)^2 + (c-c^\prime)^2 = (d-d^\prime)^2,\ a,a^\prime,b,b^\prime,c,c^\prime,d,d^\prime\in[m]
\end{equation}
has roughly $m^6 = n^{3/2}$ solutions. No linear pencil of contacting spheres contains more than $m = n^{1/4}$ spheres from $\mathcal{S}$. This means that $\mathcal{S}$ determines roughly $n^{3/2}$ 2-rich pencils, and also determines roughly $n^{3/2}$ pairs of contacting spheres. On the other hand, if we fix three non-collinear points $(a_j,b_j,c_j,d_j)\in[m]^4$, $j=1,2,3$, then the set of points $(a^\prime,b^\prime,c^\prime,d^\prime)\in[m]^4$ satisfying \eqref{tangencyIntegerPoints} for each index $j=1,2,3$ must be contained in an irreducible degree two curve in $\RR^4$. Bombieri and Pila \cite{BP} showed that such a curve contains $O(n^{1/8+\eps})$ points from $[m]^4$. We conclude that every pair of complimentary conic sections contains $O(n^{1/8+\eps})$ spheres from $\mathcal{S}$.
\end{example}
\subsection{Thanks}
The author would like to thank Kevin Hughes and Jozsef Solymosi for helpful discussions, and Adam Sheffer for comments and corrections to an earlier version of this manuscript. The author would also like to thank Gilles Castel for assistance creating Figures \ref{pencilPic} and \ref{complimentaryConicSectionsPic}. The author was partially funded by a NSERC discovery grant.

\section{Lie's line-sphere correspondence}\label{lieLineSphereSec}
In this section we will discuss the contact-incidence isomorphism introduced in Section \ref{introSection}. Throughout this section, $\baseField$ will be a field in which $-1$ is not a square, and $\extendedField=\baseField[i]$ is a degree-two extension of $\baseField$, where $i^2=-1.$
\subsection{Lines and the Klein quadric}
In this section we will be concerned with points in $\extendedField^6$ and its projectivization $\LP^5$. We will write elements of $\extendedField^6$ using the index set $(p_{01},p_{02},p_{03},p_{23},p_{31},p_{12})$, and elements of $\LP^5$ will be denoted $[p_{01}:p_{02}:p_{03}:p_{23}:p_{31}:p_{12}]$. Let $\klein(\cdot,\cdot) $ be the symmetric bilinear form on $\extendedField^6$ given by
\begin{equation*}
\begin{split}
\klein\big((p_{01},p_{02},p_{03},p_{23},p_{31},p_{12}), &(p_{01}^\prime,p_{02}^\prime,p_{03}^\prime,p_{23}^\prime,p_{31}^\prime,p_{12}^\prime)\big) \\
&= 
p_{01}p_{23}^\prime + p_{23}p_{01}^\prime
+ p_{02}p_{31}^\prime + p_{31}p_{02}^\prime
+ p_{03}p_{12}^\prime + p_{12}p_{03}^\prime.
\end{split}
\end{equation*}
We define the Klein quadric to be the set
\begin{equation*}
\begin{split}
\{ \mathbf p = [p_{01}:p_{02}:p_{03}:p_{23}:p_{31}:p_{12}]\in\LP^5 \colon p_{01}p_{23}+p_{02}p_{31}+p_{03}p_{12}=0
\}.
\end{split}
\end{equation*}
Since the polynomial $p_{01}p_{23}+p_{02}p_{31}+p_{03}p_{12}$ is homogeneous, the above set is well-defined. Since $\operatorname{char}(\baseField)\neq 2$, the above relation can also be written as
\begin{equation}
\klein(\mathbf p,\mathbf p)  = 0.
\end{equation}
We will call this the Pl\"ucker relation.

There is a one-to-one correspondence between projective lines in $\LP^3$ and points in the Klein quadric. For our purposes, however, it will be more useful for us to identify a certain class of (affine) lines in $\extendedField^3$ with a large subset of the Klein quadric. Concretely, the (affine) line $(0,s,t) + \extendedField(1,u,v)$ can be identified with the point
\begin{equation}\label{pluckerCoordinate}
\begin{array}{rcccccccccccl}
[ & p_{01}&:&p_{02}&:&p_{03}&:&p_{23}&:&p_{31}&:&p_{12}&]\\
=[&1&:&u&:&v&:&sv-tu&:&t&:&-s&].
\end{array}
\end{equation}
We will call this point the Pl\"ucker coordinates of the line. Conversely, the point with Pl\"ucker coordinates $[1:p_{02}:p_{03}:p_{23}:p_{31}:p_{12}]$ can be identified with the line $(0,-p_{12},p_{31})+\extendedField(1,p_{02},p_{03})$.

Two lines $\ell$ and $\tilde\ell$ are coplanar (and thus either intersect or are parallel) if and only if their respective Pl\"ucker points $\mathbf p$ and $\mathbf p^\prime$ satisfy the relation $\klein(\mathbf p,\mathbf p^\prime)  =0$.

\subsection{Oriented spheres and the Lie quadric}
In this section we will recall some basic facts from Lie sphere geometry. The primary reference is \cite{Cecil}, especially Chapter 2, and \cite{Milson}. Lie sphere geometry studies objects called ``Lie spheres,'' which unify the notion of a sphere, point, and plane (the latter two objects can be thought of as spheres that have zero and infinite radius, respectively). We will restrict attention to points and spheres.

Let $\lie(\cdot,\cdot) $ be the symmetric bilinear form on $\baseField^6$ defined by
$$
\lie \big((a,b,c,d,e,f), (a^\prime,b^\prime,c^\prime,d^\prime,e^\prime,f^\prime)\big) = 
2bb^\prime
+ 2cc^\prime
+ 2dd^\prime
-ae^\prime - ea^\prime
-2ff^\prime.
$$
We define the Lie quadric to be the set

\begin{equation*}
\{\mathbf q = [a:b:c:d:e:f]\in \KP^5\colon \lie (\mathbf q,\mathbf q)=0\}.
\end{equation*}
Since the polynomial $Q_{\lie}(\mathbf q) = \lie (\mathbf q,\mathbf q)$ is homogeneous, the above set is well-defined. We will refer to the equation $\lie (\mathbf q,\mathbf q)=0$ as as the Lie relation. It can also be written as 
\begin{equation}
b^2+c^2+d^2-ae-f^2=0.
\end{equation}
For each point $\mathbf q\in\KP^5$ with $\lie (\mathbf q,\mathbf q)=0$, we define the set
\begin{equation}\label{relationSphere}
S_{\mathbf q} = \{(x,y,z)\in \baseField^3 \colon a(x^2+y^2+z^2)+2bx + 2cy + 2dz + e = 0\}.
\end{equation}
When $a\neq 0$, $S_{\mathbf q}$ is the sphere defined by the equation
\begin{equation}\label{eqnOfSphere}
(x+b/a)^2 + (y+c/a)^2 + (z+d/a)^2 = (f/a)^2.
\end{equation}
In particular, the oriented sphere $S$ centered at the point $(x_1,y_1,z_1)$ with radius $r_1$ can be identified with the point
\begin{equation}\label{LieCoordinate}
\begin{array}{rcccccccccccl}
\mathbf{q}_S = [ & a &:&b&:&c&:&d&:&e&:&f&]\\
=[&1&:&-x_1& :&-y_1&:&  -z_0&:&x_1^2+y_1^2+z_1^2-r^2_1&:&r_1&].
\end{array}
\end{equation}
%
%

A set of the form $S_{\mathbf q}$, with $\mathbf q$ in the Lie Quadric will be referred to as a Lie sphere. We say that two Lie-spheres $S$ and $S^\prime$ are in ``contact'' if their corresponding Lie points $\mathbf q$ and $\mathbf q^\prime$ satisfy the relation $\lie(\mathbf q, \mathbf q^\prime)=0$, or equivalently
\begin{equation}\label{orientedTangencyRelationTwo}
(b- b^\prime)^2 + (c- c^\prime)^2 + (d- d^\prime)^2 = (f- f^\prime)^2.
\end{equation}
Indeed, examining \eqref{defnOfContact}, \eqref{LieCoordinate}, and \eqref{orientedTangencyRelationTwo}, we see that two oriented spheres $S$ and $S^\prime$ are in contact in the sense of \eqref{defnOfContact} precisely when their corresponding Lie points satisfy $\lie(\mathbf q, \mathbf q^\prime)=0$. 

\subsection{Line-sphere correspondence}\label{lineSphereCorrespondenceSubsection}
Consider the following map $\phi$ from the Lie quadric (which is a subset of $\KP^5$) to the Klein quadric (which is a subset of $\LP^5$). The point $\mathbf q = [a:b:c:d:e:f]$ is mapped to
\begin{equation}\label{phiDefn}
\begin{array}{rcccccccccccl}
\phi(\mathbf q) = [ & p_{01}&:&p_{02}&:&p_{03}&:&p_{23}&:&p_{31}&:&p_{12}&]\\
=[&a&:& d+f &:& -b + ic&:& -e &:&d-f&:&-b-ic&].
\end{array}
\end{equation}
If $\mathbf q,\mathbf q^\prime\in \KP^5$, then 
\begin{equation}
\begin{split}
\klein(\phi(\mathbf q),\phi(\mathbf q^\prime)) & = p_{01} p_{23}^\prime + p_{23}   p_{01}^\prime 
+ p_{02} p_{31}^\prime + p_{31} p_{02}^\prime 
+ p_{03} p_{12}^\prime + p_{12} p_{03}^\prime\\
&= (a)(- e^\prime) + (-e) ( a^\prime)
+ (d+f)( d^\prime -  f^\prime) + (d-f)( d^\prime+ f^\prime)\\
&\qquad\qquad\qquad\qquad\qquad\qquad
+ (-b + ic)(- b^\prime-i c^\prime) + (-b-ic)(- b^\prime + i c^\prime)\\
&= -a e^\prime - e a^\prime + 2d d^\prime -2f f^\prime +2b b^\prime+2c c^\prime\\
& = \lie(\mathbf q,\mathbf q^\prime).
\end{split}
\end{equation}
Setting $\mathbf q = \mathbf q^\prime$, we see that if $\mathbf q$ is an element of the Lie Quadric then $\phi(\mathbf q)$ is an element of the Klein Quadric. Furthermore, if $\mathbf q$ and $\mathbf q^\prime$ are distinct elements in the Lie Quadric, then the Lie spheres corresponding to $\mathbf q$ and $\mathbf q^\prime$ are in contact if and only if their images under $\phi$ are coplanar.

If $S$ is the oriented sphere centered at the point $(x_1,y_1,z_1)\in \baseField^3$ with radius $r_1\in \baseField$, then by combining \eqref{LieCoordinate} and \eqref{phiDefn}, we see that $S$ is mapped to the line in $\extendedField^3$ with Pl\"ucker coordinates 

\begin{equation}\label{imageOfSphere}
\begin{array}{rcccccccccccl}
[ & p_{01}&:&p_{02}&:&p_{03}&:&p_{23}&:&p_{31}&:&p_{12}&]\\
=[&a&:& d+f &:& -b + ic&:& -e &:&d-f&:&-b-ic&]\\
=[&1&:& -z_1+r_1 &:& x_1 - iy_1&:& r_1^2-x_1^2-y_1^2-z_1^2 &:&-z_1-r_1&:&x_1 +iy_1&].
\end{array}
\end{equation}
This corresponds to the line  $(0,\omega,t)+\extendedField(1,u,\bar\omega)$, where $t = -z_1-r_1$, $u=-z_1+r_1$, and $\omega = -x_1-iy_1$. As we saw in Section \ref{introSection}, lines of this form are contained in the Heisenberg group $\mathbb{H}$.

\subsection{Pencils of contacting spheres}\label{pencilSection}
In this section we will explore the structure of pencils of contacting spheres in $\baseField^3$, the Lie quadric, and the Heisenberg group. We will begin by proving Lemma \ref{linearPencilProp}. For the reader's convenience we will restate it here.

\begin{linearPencilLem}
Let $S_1$ and $S_2$ be distinct oriented spheres that are in contact. Then the set of oriented spheres that are in contact with $S_1$ and $S_2$ is a pencil of contacting spheres.
\end{linearPencilLem}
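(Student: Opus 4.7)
The plan is to translate the statement into a linear-algebraic question on the Lie quadric via the correspondence developed in Section \ref{lieLineSphereSec}. Let $\mathbf{q}_1,\mathbf{q}_2\in\KP^5$ be the Lie coordinates of $S_1$ and $S_2$. The contact hypothesis reads $\lie(\mathbf{q}_1,\mathbf{q}_2)=0$, and each $\mathbf{q}_i$ lies on the Lie quadric, so $\lie(\mathbf{q}_i,\mathbf{q}_i)=0$ as well. Bilinearity of $\lie$ immediately gives $\lie(\alpha\mathbf{q}_1+\beta\mathbf{q}_2,\gamma\mathbf{q}_1+\delta\mathbf{q}_2)=0$ for all coefficients, so the entire projective line $\ell\subset\KP^5$ through $\mathbf{q}_1$ and $\mathbf{q}_2$ lies on the Lie quadric and corresponds to a family of Lie spheres that are pairwise in contact. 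The oriented spheres arising from the $a\neq 0$ points of $\ell$ therefore form a set of pairwise-contacting spheres; what remains is to show that these exhaust the oriented spheres in contact with both $S_1$ and $S_2$, and that the resulting set is maximal.

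The crucial step is the converse implication: if a Lie point $\mathbf{q}$ satisfies $\lie(\mathbf{q},\mathbf{q})=\lie(\mathbf{q},\mathbf{q}_1)=\lie(\mathbf{q},\mathbf{q}_2)=0$, then $\mathbf{q}$ already lies in $\mathrm{span}(\mathbf{q}_1,\mathbf{q}_2)$. The same bilinearity calculation shows that $V=\mathrm{span}(\mathbf{q}_1,\mathbf{q}_2,\mathbf{q})$ is totally isotropic for $\lie$, so it suffices to prove that $(\baseField^6,\lie)$ admits no $3$-dimensional totally isotropic subspace---equivalently, that the Witt index of $\lie$ equals $2$.

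I expect this Witt index computation to be the main obstacle, and it is where the hypothesis that $-1$ is not a square becomes indispensable. After the change of variables $u=(a+e)/2$, $v=(a-e)/2$, the Lie form becomes (up to a factor of $2$) equal to $b^2+c^2+d^2+v^2-u^2-f^2$, which I would regroup as $(b-f)(b+f)+(c-u)(c+u)+(d^2+v^2)$. The first two summands form a pair of orthogonal hyperbolic planes contributing Witt index $2$; the binary form $d^2+v^2$ is anisotropic precisely because $-1$ is not a square in $\baseField$, so the total Witt index is exactly $2$. If $-1$ were a square, $d^2+v^2$ would split as a third hyperbolic plane and the Witt index would rise to $3$, consistent with the classical fact that over $\extendedField$ the lines coplanar with two fixed intersecting lines in $\extendedField^3$ form two additional families---lines through the common point and lines in the common plane---which the hypothesis on $\baseField$ collapses onto the single pencil we want.

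With the Witt index settled, the rest is bookkeeping. The set $\mathfrak{Q}$ of oriented spheres in contact with $S_1$ and $S_2$ is in bijection with the $a\neq 0$ points of $\ell$, and any two of these correspond to spheres in contact. For the maximality required by Definition \ref{pencilDefn}, any sphere in contact with every element of $\mathfrak{Q}$ is in particular in contact with $S_1,S_2\in\mathfrak{Q}$ and hence already lies in $\mathfrak{Q}$. Thus $\mathfrak{Q}$ is a pencil of contacting spheres.
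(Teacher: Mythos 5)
Your proof is correct, and it takes a genuinely different route from the paper's. The paper translates the problem into the Heisenberg group $\mathbb{H}\subset\extendedField^3$ and argues geometrically: every line through a point $\mathbf{w}\in\mathbb{H}$ lies in the tangent plane $\Pi_{\mathbf{w}}$, every line with direction $\mathbf{v}$ lies in a plane $\Pi_{\mathbf{v}}$, and a case analysis on three pairwise-coplanar lines forces them to be both coplanar and concurrent. Your argument stays in $\KP^5$ and reduces everything to the statement that the Lie form $\lie$ over $\baseField$ has Witt index exactly $2$, which you verify via the explicit orthogonal decomposition $b^2+c^2+d^2-ae-f^2 \cong (b^2-f^2)\perp(c^2-u^2)\perp(d^2+v^2)$ and the anisotropy of $d^2+v^2$ when $-1$ is a nonsquare. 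This is arguably the cleaner route: it makes the role of the hypothesis on $-1$ completely transparent (it is precisely what prevents a third hyperbolic plane, i.e., what rules out the ``plane of lines'' and ``star of lines'' families that exist over $\extendedField$), it avoids the case analysis, and the maximality step becomes one line because $\mathfrak{Q}$ contains $S_1,S_2$ by construction. What the paper's argument buys instead is a concrete picture of how lines sit inside $\mathbb{H}$ (the planes $\Pi_{\mathbf{v}}$ and $\Pi_{\mathbf{w}}$), structure that the paper reuses elsewhere, e.g., in the pruning argument inside Lemma \ref{Coloured2RichPointsInHeisenberg}. Both proofs are complete; yours would be a perfectly acceptable substitute.
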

\begin{proof}
We will prove the following equivalent statement: Let $\ell_1$ and $\ell_2$ be coplanar lines in the Heisenberg group that are not parallel to the $xy$ plane. Let $\mathcal{L}$ be the set of lines in the Heisenberg group not parallel to the $xy$ plane that are coplanar with both $\ell_1$ and $\ell_2$ (so in particular $\ell_1,\ell_2\in\mathcal{L})$. Then all of the lines in $\mathcal{L}$ are contained in a common plane in $\extendedField^3$ and all pass through a common point (possibly at infinity). Furthermore, $\mathcal{L}$ is maximal in the sense that no additional lines can be added to $\mathcal{L}$ while maintaining the property that each pair of lines is coplanar.

First, observe that every line in $\mathbb{H}$ that is not parallel to the $xy$ plane points in a direction $\mathbf v$ of the form $\mathbf v=(1, b, c-di)\in \extendedField^3$, with $b,c,d\in \baseField$. Every such line that points in the direction $\mathbf v$ must be contained in the plane 
\begin{equation}\label{parallelVPlane}
\Pi_{\mathbf v}=\{(x,y,z)\in \extendedField^3\colon\ bx + y = c+di\}.
\end{equation}
In particular, this implies that any set of lines in $\mathbb{H}$ that all point in the same direction and are not parallel to the $xy$ plane must be contained in a common plane in $\extendedField^3$.

Next, observe that if $\mathbf w = (x_1,y_1,z_1) \in\mathbb{H}$, then every line contained in $\mathbb{H}$ passing through $w$ must be contained in the plane
\begin{equation}\label{tangentPlaneH}
\Pi_{\mathbf w}=\{(x,y,z)\in \extendedField^3\colon\ iy_1 x -ix_1 y - iz = -iz_1\}.
\end{equation}
If $\mathbf w$ and  $\mathbf w^\prime$ are distinct points in $\mathbb{H}$, then the corresponding planes $\Pi_{\mathbf w}$ and $\Pi_{\mathbf w^\prime}$ are also distinct, and thus either are disjoint or intersect in a line. In particular, this implies that if two distinct lines contained in $\mathbb{H}$ intersect at a point $w\in\mathbb{H}$, then the unique plane in $\extendedField^3$ containing both lines is precisely $\Pi_{\mathbf w}$. 

Now, let $\ell_1,\ell_2,$ and $\ell_3$ be three lines in $\mathbb{H}$ that are pairwise coplanar. The following argument will show that $\ell_1,\ell_2,$ and $\ell_3$ must be contained in a common plane and must pass through a common point (possibly at infinity).

First, if all three lines are parallel, then we have already shown that they are contained in a common plane and we are done. If not all three lines are parallel, then we can suppose that $\ell_1$ and $\ell_2$ intersect at a point $\mathbf w\in\mathbb{H}$, so $\ell_1$ and $\ell_2$ must be contained in $\Pi_{\mathbf w}$. If $\ell_3$ is parallel to one of these lines, then without loss of generality we can suppose it is parallel to $\ell_1$, and both lines point in direction $\mathbf v$. Then $\ell_2$ intersects two distinct lines contained in $\Pi_{\mathbf v}$, so $\ell_2$ must also be contained in $\Pi_{\mathbf v}$. But if $\mathbf w^\prime = \ell_2\cap\ell_3$, then this implies $\Pi_{\mathbf w} = \Pi_{\mathbf w^\prime}=\Pi_{\mathbf v}$, which is impossible since $\mathbf w\neq \mathbf w^\prime$. Thus we may suppose that $\ell_3$ is not parallel to either $\ell_1$ or $\ell_2$. Suppose $\ell_3$ intersects $\ell_1$ and $\ell_2$ at distinct points. Then $\ell_3$ must also be contained in $\Pi_{\mathbf w}$. But at least one of the points $\ell_1\cap\ell_2$ and $\ell_1\cap\ell_3$ must differ from $\mathbf w$, which implies there is a point $\mathbf w^\prime\neq \mathbf w$ with $\Pi_{\mathbf w}=\Pi_{\mathbf w^{\prime}}$; this is a contradiction. We conclude that $\ell_3$ passes through $\ell_1\cap\ell_2$. Since all lines in $\mathbb{H}$ passing through $\mathbf w$ must be contained in $\Pi_{\mathbf w}$, we conclude that $\ell_1,\ell_2,$ and $\ell_3$ are coplanar and coincident.

It now immediately follows that if every line in $\mathcal{L}$ is coplanar with both $\ell_1$ and $\ell_2$, then each of these lines must either be parallel with $\ell_1$ and $\ell_2$ (if $\ell_1$ and $\ell_2$ are parallel), or must pass through their common intersection point (if $\ell_1$ and $\ell_2$ are not parallel). Furthermore, each line in $\mathcal{L}$ must be contained in the plane spanned by $\ell_1$ and $\ell_2$. In particular, all of the lines in $\mathcal{L}$ are coplanar and pass through a common point (possibly at infinity). By construction the set $\mathcal{L}$ is maximal, i.e.~no additional lines can be added to $\mathcal{L}$ while maintaining the property that all pairs of lines in $\mathcal{L}$ are coplanar.
\end{proof}

If $\mathbf w= (x_1,y_1,z_1)\in\mathbb{H}$, then the set of lines in $\mathbb{H}$ that are not parallel to the $xy$ plane containing $\mathbf w$ are of the form $(0,c+id,a)+\extendedField(1,b,c-id)$, where
\begin{equation}\label{linesThruAPt}
\begin{split}
c+b\operatorname{Re} x_1 &  = \operatorname{Re} y_1,\\
d+b\operatorname{Im} x_1 &  = \operatorname{Im} y_1,\\
a + c \operatorname{Re} x_1 + d \operatorname{Im}x_1 &  = \operatorname{Re} z_1.
\end{split}
\end{equation}
The set of quadruples $(a,b,c,d)$ satisfying \eqref{linesThruAPt} belong to a line in $\baseField^4$; we will call this line $V_{\mathbf w}$. Similarly, if $\mathbf v=(1, b, c-di)\in \extendedField^3$, with $b,c,d\in \baseField$, the set of lines in $\mathbb{H}$ pointing in direction $\mathbf v$ are of the form $(0,c+id,a)+\extendedField(1,b,c-id)$ with $a\in \baseField$. We will call this line $V_{\mathbf v}$.
\begin{rem}\label{allSameDifferentRadiiRem}
The above discussion implies that the spheres in a pencil of contacting spheres must either all have the same (signed) radius, or must all have different radii. 
\end{rem}

\subsection{Complimentary conics}\label{conicsSection}
In this section we will discuss the structure of sets of spheres that form complimentary conics. The next lemma says that pencils of spheres and and pairs of complimentary conics are the only configurations allowing many spheres to be in contact.

\begin{lem}\label{doublyRuledSpheresProp}
Let $\mathcal{S}$ and $\mathcal{S}^\prime$ be sets of oriented spheres, each of cardinality at least three, so that every sphere in $\mathcal{S}$ is in contact with every sphere in $\mathcal{S}^\prime$ and vice-versa. Then either $\mathcal{S} \cup \mathcal{S}^\prime$ is contained in a pencil of contacting spheres (in which case every sphere in $\mathcal{S} \cup \mathcal{S}^\prime$ is in contact with every other sphere), or $\mathcal{S}$ and $\mathcal{S}^\prime$ are contained in complimentary conic sections.
\end{lem}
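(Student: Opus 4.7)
The plan is to transport the problem into the line-coplanarity picture via Lie's correspondence of Section~\ref{lineSphereCorrespondenceSubsection}, and then appeal to the classical regulus theorem. Let $\mathcal{L}$ and $\mathcal{L}'$ be the sets of lines in $\mathbb{H}\subset\extendedField^3$ corresponding to the spheres of $\mathcal{S}$ and $\mathcal{S}'$; both have cardinality at least three, and every line of $\mathcal{L}$ is coplanar with every line of $\mathcal{L}'$. The proof then splits into two cases according to whether some pair of lines inside a single family is itself coplanar.

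Case A: suppose two distinct lines $\ell_1,\ell_2\in\mathcal{L}$ are coplanar. By Lemma~\ref{linearPencilProp} the set $P$ of lines in $\mathbb{H}$ coplanar with both $\ell_1$ and $\ell_2$ is a pencil, and every line of $\mathcal{L}'$ belongs to $P$. Picking two distinct lines $\ell_1',\ell_2'\in\mathcal{L}'\subseteq P$ (which are coplanar because they lie in a common pencil), Lemma~\ref{linearPencilProp} applied again produces a pencil $P'$ through them that contains all of $\mathcal{L}$. Since the lines in any pencil are pairwise coplanar, $P\subseteq P'$ and $P'\subseteq P$, hence $P=P'$ and $\mathcal{L}\cup\mathcal{L}'\subseteq P$; translating back, $\mathcal{S}\cup\mathcal{S}'$ lies in a pencil of contacting spheres. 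The symmetric case with the coplanar pair inside $\mathcal{L}'$ is identical.

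Case B: otherwise every pair of lines in $\mathcal{L}$ is skew, and every pair in $\mathcal{L}'$ is skew. Pick three pairwise skew lines $\ell_1,\ell_2,\ell_3\in\mathcal{L}$. The classical regulus theorem asserts that three pairwise skew lines in projective three-space are contained in a unique smooth quadric surface $Q$, which is doubly ruled, with $\ell_1,\ell_2,\ell_3$ in one ruling and the opposite ruling consisting precisely of the lines meeting all three. Irreducibility of $Q$ is automatic, since a quadric cone or a union of two planes cannot contain three pairwise skew lines. Every line of $\mathcal{L}'$ is coplanar with $\ell_1,\ell_2,\ell_3$, so $\mathcal{L}'$ lies in the opposite ruling of $Q$; repeating with three skew lines from $\mathcal{L}'$ and invoking the uniqueness of $Q$, $\mathcal{L}$ lies in the remaining ruling.

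Finally, each ruling of a smooth quadric in $\extendedField^3$ is a one-parameter family of lines whose Plücker coordinates trace out a conic on the Klein quadric. Composing with the map $\phi$ of Section~\ref{lineSphereCorrespondenceSubsection} and the parametrization of oriented spheres by their Lie coordinates from \eqref{LieCoordinate}, the map from $(x_1,y_1,z_1,r_1)\in\baseField^4$ to the relevant Plücker coordinates becomes affine linear in the appropriate affine chart, so the sphere parameters associated with each ruling form the $\baseField$-points of an irreducible conic in $\baseField^4$. Taking $\mathfrak{C}$ and $\mathfrak{C}'$ to be the full sets of oriented spheres whose associated lines lie in $\mathbb{H}$ and in one of the two rulings of $Q$ gives the complimentary conic sections required by Definition~\ref{conicDefn}. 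The main obstacle is this last paragraph: one must track the Plücker/Lie bookkeeping carefully enough to verify that the image of each ruling really is an irreducible conic defined over $\baseField$ (and in particular not a line or a pair of lines), so that the non-degeneracy clause in Definition~\ref{conicDefn} is genuinely satisfied.
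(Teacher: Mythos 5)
Your proof is essentially correct but takes a noticeably different, more geometric path than the paper, and your closing worry is aimed at the wrong target.

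Your Case~A is the same argument the paper gives (transported through the line picture): once $\mathcal{S}'$ sits inside the pencil determined by two contacting spheres of $\mathcal{S}$, applying Lemma~\ref{linearPencilProp} once more to two spheres of $\mathcal{S}'$ sweeps all of $\mathcal{S}$ into the same pencil. For Case~B, however, the paper does something much shorter and entirely combinatorial: it simply sets $\mathfrak{C}'$ to be the set of \emph{all} spheres in contact with every sphere of $\mathcal{S}$, and $\mathfrak{C}$ to be the set of all spheres in contact with every sphere of $\mathfrak{C}'$. Then $\mathcal{S}\subseteq\mathfrak{C}$ and $\mathcal{S}'\subseteq\mathfrak{C}'$ are immediate, contact between the two families holds by construction, maximality holds by construction, and ``no two in the same set are in contact'' is exactly what you showed in the first half (otherwise we would be back in the pencil case). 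That is the entire proof, and no algebraic geometry is invoked. Your route via the regulus theorem reproduces the structural picture that the paper develops separately in Section~\ref{conicsSection} as supplementary exposition, but it is not needed to prove Lemma~\ref{doublyRuledSpheresProp}, and it drags in extra care about working over $\extendedField$ rather than an algebraically closed field, and about the affine-versus-projective status of the Pl\"ucker chart.

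The concern you flag at the end is misplaced: Definition~\ref{conicDefn} has no clause requiring $\mathfrak{C}$, $\mathfrak{C}'$ to literally be irreducible conics in $\baseField^4$. It is a purely combinatorial definition (cardinality at least three, pairwise cross-contact, no same-set contacts, maximality), so there is no ``non-degeneracy clause'' to verify; the ``not a line'' conclusion mentioned in the remark after the definition follows automatically from ``no two spheres in the same set are in contact.'' What you actually do need to check, and do not address, is the maximality clause of Definition~\ref{conicDefn} for your geometrically-defined $\mathfrak{C}$, $\mathfrak{C}'$: you would need to argue that any line in $\mathbb{H}$ coplanar with three skew lines of one ruling of $Q$ must itself lie in $Q$ and hence in the opposite ruling. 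This is true (a line in projective three-space meeting a quadric in three points lies on it) but is exactly the kind of bookkeeping the paper's definition of $\mathfrak{C}$, $\mathfrak{C}'$ lets one avoid.
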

\begin{proof}
First, suppose that two spheres $S_1,S_2 \in \mathcal{S}$ are in contact. Then by Proposition \ref{linearPencilProp}, the spheres in $\{S_1\cup S_2\}\cup \mathcal{S}^\prime$ are contained in a pencil $\mathfrak{P}$ of contacting spheres. Since $|\mathcal{S}|\geq 3$, we have that $|\mathfrak{P}^\prime\cap \mathcal{S}^\prime|\geq 3$. Proposition \ref{linearPencilProp} now implies that every sphere from $\mathcal{S}$ is contained in $\mathfrak{P}$. We conclude that $\mathcal{S} \cup \mathcal{S}^\prime$ is contained in a pencil of contacting spheres. An identical argument applies if two spheres $S_1^\prime,S_2^\prime \in \mathcal{S}^\prime$ are in contact. Thus we can suppose that no two spheres from $\mathcal{S}$ are in contact, and no two spheres from $\mathcal{S}^\prime$ are in contact. Let $\mathfrak{C}^\prime$ be the set of spheres in contact with each sphere from $\mathcal{S}$, and let $\mathfrak{C}$ be the set of spheres that are in contact with each sphere from $\mathfrak{C}^\prime$; we have that $\mathfrak{C}$ and $\mathfrak{C}^\prime$ are complimentary conics containing $S$ and $S^\prime$, respectively.
\end{proof}

We will now consider the structure of a pair of complimentary conic sections. Let $\mathbf{q}_1,\mathbf{q}_2,\mathbf{q}_3$ be elements of the Lie quadric, no two of which are in contact. Let $\ell_1,\ell_2$ and $\ell_3$ be the lines in $\extendedField^3$ corresponding to the images of $\mathbf{q}_1,\mathbf{q}_2,$ and $\mathbf{q}_3$ under $\phi$. Let $\overline \baseField$ be the algebraic closure of $\baseField$ (so in particular $E\subset \overline \baseField$), and let $\overline{\ell_i}$ be the Zariski closure of $\ell_i$ in $\overline \baseField^3$. Then $\overline{\ell_1},\overline{\ell_2}$ and $\overline{\ell_3}$ are skew lines in $\overline \baseField^3$, and the set of lines in $\overline \baseField^3$ that are coplanar with each $\overline{\ell_i}$ form a ruling of a doubly ruled surface in $\overline \baseField^3$. Let $\mathfrak{R}$ be the set of lines in the ruling that contains the lines $\overline{\ell_1},\overline{\ell_2}$ and $\overline{\ell_3}$ and let $\mathfrak{R}^\prime$ be the set of lines in the other ruling. The sets $\mathfrak{R}$ and $\mathfrak{R}^\prime$ are irreducible conic curves in the variety $\{\mathbf{p} \in \overline \baseField \mathbf{P}^5\colon \klein(\mathbf p,\mathbf p)=0\}$. We then have $\mathfrak{C} = \phi^{-1}(\mathfrak{R})$ and $\mathfrak{C}^\prime = \phi^{-1}(\mathfrak{R}^\prime)$. Since $\phi$ is linear, this implies that $\mathfrak{C}$ and $\mathfrak{C}^\prime $ are precisely the $\baseField$-points of an irreducible curve in $\overline \baseField \mathbf{P}^5$ that is contained in the Lie quadric (or more precisely, contained in the variety $\{\mathbf{q} \in \overline \baseField \mathbf{P}^5\colon \lie(\mathbf q,\mathbf q)=0\}$). Recalling the identification \eqref{eqnOfSphere} of points in the Lie quadric with spheres in $\baseField^3$, we see that $\mathfrak{C}$ and $\mathfrak{C}^\prime$ correspond to sets of oriented spheres whose $(x,y,z,r)$ coordinates are contained in conic sections, neither of which are lines. 

Note that for some triples $\mathbf{q}_1,\mathbf{q}_2,\mathbf{q}_3$ it is possible that no elements of the Lie quadric will be in contact with each $\mathbf{q}_i$. This can happen, for example, if $\baseField=\RR$; $\mathbf{q}_1$ and $\mathbf{q}_2$ correspond to disjoint spheres; and $\mathbf{q}_3$ corresponds to a sphere contained inside $\mathbf{q}_1$. In this situation the complimentary conic sections $\mathfrak{C}$ and $\mathfrak{C}^\prime$ are well defined, but $\mathfrak{C}^\prime$ does not have any $\RR$-points.

\section{Incidence geometry in the Heisenberg group}\label{incidenceGeomHeisenberg}

In this section we will explore the incidence geometry of lines in the Heisenberg group. In particular, we will prove Theorems \ref{spheresInGeneralFieldThm}, \ref{unitDistancesR3}, and \ref{pointSphereThm}. Our main tool is the following structure theorem for sets of lines in three space that determine many 2-rich points. The version stated here is Theorem 3.8 from \cite{GZ}; a similar statement can also be found in \cite{Kollar}.

\begin{thm}\label{2RichManyLinesInPlaneOrRegulus}
There are absolute constants $c>0$ and $C$ so that the following holds. Let $\mathcal{L}$ be a set of $n$ lines in $\extendedField^3$, with $n\leq c(\operatorname{char} \extendedField)^2$ (if $\extendedField$ has characteristic zero then we impose no constraints on $n$). Then for each $A\geq C\sqrt n$, either $\mathcal{L}$ determines at most $CAn$ $2$-rich points, or there is a plane or doubly-ruled surface in $\extendedField^3$ that contains at least $A$ lines from $\mathcal{L}$.
\end{thm}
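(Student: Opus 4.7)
The plan is to use polynomial partitioning combined with a structural theorem for line-rich algebraic surfaces. Here is how I would proceed.

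First I would choose a degree $D$ on the order of $n/A$ and apply the polynomial ham sandwich theorem to produce a polynomial $P\in \extendedField[x,y,z]$ of degree at most $D$ whose zero set $Z=\{P=0\}$ cuts $\extendedField^3$ into $O(D^3)$ open cells, each meeting at most $O(n/D^2)$ lines from $\mathcal{L}$. Since the assumption $A\geq C\sqrt n$ gives $D\leq c\sqrt n$, the characteristic hypothesis $n\leq c(\operatorname{char}\extendedField)^2$ ensures that $D$ stays well below $\operatorname{char}\extendedField$; this is exactly what the polynomial method and the later flecnode argument require.

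Next I would classify every $2$-rich point of $\mathcal{L}$ as one of three types: a point lying in the interior of some cell, a point on $Z$ through which some line not contained in $Z$ passes, or a point on $Z$ at which two lines both contained in $Z$ cross. The interior points are handled by the trivial $\binom{m}{2}$ bound in each cell, which summed over the $O(D^3)$ cells yields $O(D^3(n/D^2)^2)=O(n^2/D)=O(An)$. The transverse boundary points are controlled by B\'ezout: each line not contained in $Z$ meets $Z$ in at most $D$ points, giving $O(nD)=O(n^2/A)$, and the inequality $A\geq C\sqrt n$ converts this into $O(An)$.

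The substantive case is that of $2$-rich points lying on $Z$ at which both incident lines are contained in $Z$. I would decompose $Z=\bigcup Z_j$ into irreducible components and let $n_j$ denote the number of lines of $\mathcal{L}$ contained in $Z_j$. If some $Z_j$ is a plane or doubly-ruled surface with $n_j\geq A$ we are finished. Otherwise each planar or doubly-ruled component contributes at most $A$ lines, and for every remaining component I would invoke the Guth--Katz/Koll\'ar/Guth--Zahl analysis of the flecnode polynomial to conclude $n_j=O((\deg Z_j)^2)$. Since $\sum \deg Z_j\leq D$, summing $\binom{n_j}{2}$ over both regimes contributes $O(An)$, completing the bound.

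The main obstacle will be the flecnode step in positive characteristic: one must show that an irreducible surface of degree $d$ in $\extendedField^3$ which is neither a plane nor doubly-ruled contains at most $O(d^2)$ lines, with a constant independent of $\extendedField$. The flecnode polynomial has degree of order $d$, and the proof that either every line on such a surface is a flecnode or the line count is $O(d^2)$ requires a characteristic-aware argument; this is precisely what forces the hypothesis $n\leq c(\operatorname{char}\extendedField)^2$. Once this structural input is in hand, the rest is routine polynomial-partitioning bookkeeping and B\'ezout counting as sketched above.
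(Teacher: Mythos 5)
The paper does not prove this statement: it is imported verbatim as Theorem~3.8 of Guth--Zahl \cite{GZ} (with a similar result in Koll\'ar \cite{Kollar}), so what you should be comparing against is the proof in that reference, which proceeds by \emph{degree reduction} (interpolating a low-degree polynomial through a random subsample of the lines) rather than by polynomial partitioning. This is not a cosmetic difference: it is the whole reason the cited theorem works over an arbitrary field.

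The central gap in your proposal is the very first step. The polynomial ham sandwich/partitioning theorem is a real-analytic statement --- it is proved via the Borsuk--Ulam theorem and talks about open connected cells in $\RR^d$. There is no analogue over $\CC$, and certainly none over a finite field; the notions of ``cell'' and ``$O(D^3)$ connected components of $\extendedField^3\setminus Z(P)$'' have no meaning there. But Theorem~\ref{2RichManyLinesInPlaneOrRegulus} is stated and used in this paper over $\extendedField=\baseField[i]$ for a general field $\baseField$ in which $-1$ is not a square, including fields of positive characteristic (that is exactly why the hypothesis $n\le c(\operatorname{char}\extendedField)^2$ is present). So the partitioning step that your argument rests on simply is not available. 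The replacement used in \cite{GZ} and \cite{Kollar} is degree reduction: prune so that every surviving line contains $\gtrsim A$ two-rich points, randomly sample the lines with probability $\sim 1/D$ with $D\sim n/A$, fit a degree-$O(D)$ polynomial through the sampled lines by counting dimensions, and observe that every unsampled line meets $Z(P)$ in $>D$ points and hence lies in $Z(P)$. After that, \emph{all} lines live on $Z(P)$ and one argues component by component, which is why there is no ``in-cell'' or ``transverse'' case to handle at all.

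A secondary gap, independent of the field issue: in the ``both lines contained in $Z(P)$'' case you use only the bound $n_j=O(d_j^2)$ for lines on a non-planar, non-doubly-ruled component of degree $d_j$. That is not strong enough by itself. With $A\sim\sqrt n$ and $D\sim n/A\sim\sqrt n$, a single such component could a priori have $\sim D^2\sim n$ lines, and bounding $2$-rich points by $\binom{n_j}{2}$ then gives $\sim n^2$, far exceeding $An\sim n^{3/2}$. What is actually needed is a bound on the number of points of $Z_j$ incident to $\ge 2$ lines of $Z_j$ (on the order of $d_j^3$, coming from the same flecnode/Cayley--Salmon machinery), together with a B\'ezout bound for cross-component intersections; these sum to $O(D^3)=O((n/A)^3)\le O(An)$ precisely because $A\ge C\sqrt n$. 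Your intuition about the flecnode polynomial and the role of the characteristic restriction (keeping $D$ below $\operatorname{char}\extendedField$ so that the relevant derivatives behave) is correct, but the structural input you would need is stronger than the line count you stated.
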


Recall that lines contained in the Heisenberg group are somewhat special. In particular, Lemma \ref{linearPencilProp} says that any set of pairwise coplanar lines must also pass through a common point (possibly at infinity). The next lemma is a version of Theorem \ref{2RichManyLinesInPlaneOrRegulus} adapted to lines in the Heisenberg group. The hypotheses and conclusions of the theorem have also been slightly tweaked to better fit our needs for proving Theorems \ref{spheresInGeneralFieldThm}, \ref{unitDistancesR3}, and \ref{pointSphereThm}.
\begin{lem}\label{Coloured2RichPointsInHeisenberg}
There is an absolute constant $C_1$ so that the following holds. Let $\mathcal{L}$ be a set of $n$ lines in the Heisenberg group that are not parallel to the $xy$ plane, with $n\leq (\operatorname{char} \extendedField)^2$ (if $\extendedField$ has characteristic zero then we impose no constraints on $n$). Suppose that each line in $\mathcal{L}$ is coloured either red or blue. Then for each $A\geq C_1\sqrt n$, either there are at most $C_1An$ points that are incident to at least one red and one blue line, or there is a doubly-ruled surface with one ruling that contains at least $A$ red lines and a second ruling that contains at least $A$ blue lines. 
\end{lem}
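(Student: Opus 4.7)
The plan is to apply Theorem~\ref{2RichManyLinesInPlaneOrRegulus} iteratively. Start with $\mathcal{L}_0 = \mathcal{L}$, and at each step~$i$ search for a plane or doubly-ruled surface in $\extendedField^3$ containing at least $A$ lines of the current set $\mathcal{L}_i$; if one is found, remove those $m_i \geq A$ lines to form $\mathcal{L}_{i+1}$ and track the bichromatic $2$-rich points of $\mathcal{L}_i$ that are \emph{destroyed}, i.e.\ that involve at least one removed line (any bichromatic $2$-rich point missed by every removed line survives to $\mathcal{L}_{i+1}$). The iteration halts either when no plane or doubly-ruled surface contains $A$ lines from the current set, in which case the first alternative of Theorem~\ref{2RichManyLinesInPlaneOrRegulus} bounds the remaining bichromatic $2$-rich points by $CA|\mathcal{L}_{\text{final}}| \leq CAn$, or when a doubly-ruled surface appears whose two rulings contain $\geq A$ red lines and $\geq A$ blue lines respectively, in which case we immediately return the second alternative of Lemma~\ref{Coloured2RichPointsInHeisenberg}.

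The per-step analysis splits into two cases. In the plane case, Lemma~\ref{linearPencilProp} forces the $m_i$ lines in $\Pi$ to concur at a single point $p$ (possibly at infinity); any two of them meet only at $p$, and any non-plane line meets $\Pi$ in at most one point and hence at most one plane line away from $p$. Thus at most $|\mathcal{L}_i|+1$ bichromatic $2$-rich points of $\mathcal{L}_i$ involve a plane line, giving an upper bound on the destroyed count. In the doubly-ruled case, let $r_j, b_j$ be the red and blue line counts in ruling $j$ of the surface $Z$, so $r_1+b_1+r_2+b_2 = m_i$. If $\min(r_1,b_2) \geq A$ or $\min(r_2,b_1) \geq A$ we return the second alternative; otherwise a short four-subcase analysis of the negation yields $r_1 b_2 + r_2 b_1 \leq A m_i$, bounding the bichromatic $2$-rich points internal to $Z$. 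Bichromatic $2$-rich points on a removed $Z$-line together with a non-$Z$-line are bounded by $2|\mathcal{L}_i|$ via B\'ezout: a line off $Z$ meets the degree-two surface $Z$ in at most two points, and at most two lines of $Z$ pass through each such point.

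Summing over iterations, and using $m_i \geq A$, $\sum_i m_i \leq n$, and $\sum_i |\mathcal{L}_i| \leq n\cdot(n/A) = n^2/A$: the plane-step contributions total $O(n^2/A)$, which is $O(An)$ because $A \geq C_1\sqrt{n}$; the internal-regulus contributions telescope to $\sum_i A m_i \leq An$; and the between-regulus contributions are $O(\sum_i |\mathcal{L}_i|) = O(An)$. Adding the terminal $CAn$ bound gives the desired $O(An)$ estimate on bichromatic $2$-rich points of $\mathcal{L}$, and $C_1$ can be taken to be an absolute constant. The step I expect to be the main obstacle is the internal-regulus bound $r_1 b_2 + r_2 b_1 \leq A m_i$: it is crucial that this is proportional to the lines \emph{removed} at step~$i$ rather than the full $|\mathcal{L}_i|$, since otherwise the regulus contributions would telescope only to $O(n^2)$ rather than $O(An)$. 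This proportionality rests on the observation that when the ``special'' colour/ruling configuration fails, one ruling must contain strictly fewer than $A$ lines of some colour, which then multiplies against at most $m_i$ total lines in the opposite ruling.
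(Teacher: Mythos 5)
Your proof takes a genuinely different route from the paper's. The paper applies Theorem~\ref{2RichManyLinesInPlaneOrRegulus} exactly once, after first pruning $\mathcal{L}$ so that every surviving line carries at least $2A$ bichromatic $2$-rich points; it then rules out the plane case by a global counting contradiction (the pruned lines in the plane, all concurrent by Lemma~\ref{linearPencilProp}, would force more than $n$ distinct crossing points), and extracts the red-in-one-ruling/blue-in-the-other structure by a two-step pigeonhole along a rich red line $\ell$ and then a rich blue line $\ell'$. You instead iteratively peel off rich planes and reguli, charging the destroyed bichromatic $2$-rich points at each step. Your per-step accounting is correct: Lemma~\ref{linearPencilProp} gives concurrence in the plane step, so each non-plane line contributes at most one new crossing away from the common point, yielding $\leq |\mathcal{L}_i|+1$; the negation of $\min(r_1,b_2)\geq A$ or $\min(r_2,b_1)\geq A$ does give $r_1b_2+r_2b_1 \leq A m_i$ in all four subcases (each subcase isolates one of $b_1+b_2$, $r_2+b_2$, $r_1+b_1$, $r_1+r_2$, each $\leq m_i$); and B\'ezout bounds the crossings between removed $Z$-lines and lines off $Z$ by $2|\mathcal{L}_i|$. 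With $\sum m_i \leq n$ and $\sum|\mathcal{L}_i| \leq n\cdot n/A = n^2/A = O(An)$ once $A \geq C_1\sqrt n$, the telescoping closes. The key structural point you flag---that the internal-regulus charge must scale with $m_i$, not $|\mathcal{L}_i|$---is correctly identified and correctly resolved. Your route is more mechanical but avoids the paper's slightly delicate pruning-then-contradiction argument for the plane case, which is a genuine trade.

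There is, however, one real gap. Theorem~\ref{2RichManyLinesInPlaneOrRegulus} is only applicable when the line set has cardinality at most $c(\operatorname{char}\extendedField)^2$ for a (possibly small) absolute constant $c$, whereas Lemma~\ref{Coloured2RichPointsInHeisenberg} is stated under the weaker hypothesis $n \leq (\operatorname{char}\extendedField)^2$. You apply Theorem~\ref{2RichManyLinesInPlaneOrRegulus} directly to $\mathcal{L}_0 = \mathcal{L}$, which is not justified in positive characteristic when $c<1$; moreover the removal steps only shrink $|\mathcal{L}_i|$ gradually, so the early iterations all suffer from the same problem. The paper addresses exactly this issue by a preliminary random subsample (keep each line independently with probability $c/10$), which with high probability has size $\leq c(\operatorname{char}\extendedField)^2$ while retaining a constant fraction of the bichromatic $2$-rich points. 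You would need to insert the same step, or an equivalent one, before starting your iteration; after that your analysis goes through with adjusted constants.
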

\begin{proof}
Suppose that there are more than $C_1An$ points that are incident to at least one red and one blue line (we will call such points bichromatic 2-rich points). We will show that if $C_1$ is chosen sufficiently large then there is a doubly-ruled surface with one ruling that contains at least $A$ red lines and a second ruling that contains at least $A$ blue lines. 

Observe that Lemma \ref{Coloured2RichPointsInHeisenberg} assumes that $n\leq (\operatorname{char} \extendedField)^2$, while Theorem \ref{2RichManyLinesInPlaneOrRegulus} places the more stringent requirement $n\leq c(\operatorname{char} \extendedField)^2$.  Our first task will be to reduce the size of $\mathcal{L}$ slightly. Without loss of generality we can assume that $0<c\leq 1$, since if $c>1$ then we can replace $c$ with $1$ and Theorem \ref{2RichManyLinesInPlaneOrRegulus} remains true. Let $\mathcal{L}^\prime\subset\mathcal{L}$ be a set obtained by randomly keeping each element in $\mathcal{L}$ with probability $c/10$. With high probability, a set of this form will have cardinality at most $c(\operatorname{char} \extendedField)^2$ and will will determine at least $C_2An$ bichromatic 2-rich points, with $C_2=\frac{C_1c^2}{1000}$. In particular, we can assume that $\mathcal{L}^\prime$ has both of these properties.

Our next task is to prune the set $\mathcal{L}^\prime$ slightly so that all of the unpruned lines contain many bichromatic 2-rich points. Define $\mathcal{L}_0^\prime=\mathcal{L}^\prime$. For each index $j=0,1,\ldots$, let $\mathcal{L}_{j+1}^\prime$ be obtained by removing a line from $\mathcal{L}_j^\prime$ that contains at most $2A$ bichromatic 2-rich points. If no such line exists, then halt. Observe that $\mathcal{L}_j^\prime$ has cardinality $|\mathcal{L}^\prime|-j$ and determines at least $C_2An -2Aj$ bichromatic 2-rich points. If $C_2$ is sufficiently large then this process must halt for some index $j$ with $|\mathcal{L}_j^\prime|>0$. Let $\mathcal{L}^{\prime\prime}$ be the resulting set; we have that $\mathcal{L}^{\prime\prime}$ determines at least $C_3An$ bichromatic 2-rich points, with $C_3 = C_2/2$, and every line determines at least $2A$ bichromatic 2-rich points.

Apply Theorem \ref{2RichManyLinesInPlaneOrRegulus} to $\mathcal{L}^{\prime\prime}$. If $C_1$ (and thus $C_3$) is chosen sufficiently large then there exists a plane or doubly-ruled surface $Z$ that contains at least $2A$ lines from $\mathcal{L}^{\prime\prime}$; Let $\mathcal{L}^{\prime\prime}_Z$ be the set of lines from $\mathcal{L}^{\prime\prime}$ that are contained in $Z$. We claim that $Z$ cannot be a plane; indeed if $Z$ was a plane then by Lemma \ref{linearPencilProp} the lines in $\mathcal{L}^{\prime\prime}_Z$ must all intersect at a common point (possibly at infinity). In particular, each of the lines in $\mathcal{L}^{\prime\prime}_Z$ contain at least $2A$ 2-rich points, and at least $2A-1$ of these 2-rich points must come from lines in $\mathcal{L}^{\prime\prime}\backslash \mathcal{L}^{\prime\prime}_Z$. Thus the lines in $\mathcal{L}^{\prime\prime}\backslash \mathcal{L}^{\prime\prime}_Z$ must intersect $Z$ in at least $(2A)(2A-1)>n$ distinct points. Since each line in $\mathcal{L}^{\prime\prime}\backslash \mathcal{L}^{\prime\prime}_Z$ intersects $Z$ in at most one point, this is impossible. 

We conclude that $Z$ is a doubly ruled surface. Since each line in $\mathcal{L}^{\prime\prime}$ that is not contained in $Z$ can intersect $Z$ in at most two points, by pigeonholing there exists a line $\ell\in \mathcal{L}^{\prime\prime}_Z$ that contains $\geq 2A$ bichromatic 2-rich points, and at least $A$ of these points come from lines contained in the dual ruling of $Z$. Without loss of generality the line $\ell$ is red; this means that the ruling dual to $\ell$ contains at least $A$ blue lines. Again by pigeonholing, at least one of these blue lines $\ell^\prime$ contains $\geq 2A$ bichromatic 2-rich points, and at least $A$ of these points come from lines contained in the ruling of $Z$ dual to $\ell^\prime$ (i.e. the ruling that contains $\ell$). We conclude that the ruling containing $\ell$ must contain at least $A$ red lines.
\end{proof}
\begin{cor}\label{2RichPointsInHeisenberg}
Let $\mathcal{L}$ be a set of $n$ lines in the Heisenberg group that are not parallel to the $xy$ plane, with $n\leq (\operatorname{char} \extendedField)^2$ (if $\extendedField$ has characteristic zero then we impose no constraints on $n$). Then either $\mathcal{L}$ determines $O(n^{3/2})$ 2-rich points, or there is a doubly-ruled surface, each of whose rulings contain at least $\sqrt n$ lines from $\mathcal{L}$.
\end{cor}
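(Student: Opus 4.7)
The plan is to derive the corollary from Lemma \ref{Coloured2RichPointsInHeisenberg} by means of a random two-colouring argument. Specifically, I will assign each line in $\mathcal{L}$ a colour from $\{\mathrm{red},\mathrm{blue}\}$ independently and uniformly at random; the key observation is that any point incident to at least two lines is bichromatic with probability at least $1/2$, since the event that all incident lines have the same colour has probability at most $2\cdot 2^{-2}=1/2$. Thus if $N$ denotes the number of $2$-rich points determined by $\mathcal{L}$, the expected number of bichromatic $2$-rich points is at least $N/2$, so there exists a specific colouring of $\mathcal{L}$ realizing at least $N/2$ bichromatic $2$-rich points.

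Fix such a colouring and apply Lemma \ref{Coloured2RichPointsInHeisenberg} with $A = C_1\sqrt{n}$ (which is permissible since $n\leq (\operatorname{char}\extendedField)^2$). The lemma produces a dichotomy. In the first case, the number of bichromatic $2$-rich points is at most $C_1 A n = C_1^2\, n^{3/2}$, which combined with $N/2\leq C_1^2 n^{3/2}$ yields $N=O(n^{3/2})$, giving the first alternative of the corollary. In the second case, there exists a doubly-ruled surface $Z\subset \extendedField^3$ one of whose rulings contains at least $A=C_1\sqrt n$ red lines from $\mathcal{L}$ and whose other ruling contains at least $C_1\sqrt n$ blue lines from $\mathcal{L}$. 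Since $C_1\sqrt n\geq \sqrt n$ and the coloured lines are elements of $\mathcal{L}$, each ruling of $Z$ contains at least $\sqrt n$ lines from $\mathcal{L}$, giving the second alternative.

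There is no real obstacle here beyond verifying the probabilistic estimate, since the Heisenberg-specific geometric content, namely the impossibility of a plane containing many lines from a single ruling-like configuration, has already been absorbed into Lemma \ref{Coloured2RichPointsInHeisenberg}. One minor point worth noting is that the colouring is only used to decouple the two rulings of $Z$ in the structured case; the uncoloured $2$-rich point count is controlled simply by the factor of $2$ loss from taking a random colouring. The constant $C_1$ from Lemma \ref{Coloured2RichPointsInHeisenberg} becomes the implicit constant in the $O(n^{3/2})$ bound, with no additional dependencies.
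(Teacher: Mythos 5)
Your proof is correct and takes essentially the same approach as the paper's: both random two-colour the lines and then invoke Lemma \ref{Coloured2RichPointsInHeisenberg}, with the contrapositive dichotomy giving either the $O(n^{3/2})$ bound or the doubly-ruled surface. Your expectation argument (extracting a colouring that preserves at least half of the $2$-rich points as bichromatic) is a clean way to justify the step that the paper states as a ``with high probability'' claim with a factor of $1/3$.
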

\begin{proof}
Suppose that $\mathcal{L}$ determines $C_2n^{3/2}$ 2-rich points. Randomly colour each line in $\mathcal{L}$ either red or blue. With high probability $\mathcal{L}$ determines at least $\frac{C_2}{3}n^{3/2}$ bichromatic 2-rich points. If $C_2$ is sufficiently large, then Lemma \ref{2RichPointsInHeisenberg} implies that there is a doubly-ruled surface, each of whose rulings contain at least $ \sqrt n$ lines from $\mathcal{L}$.
\end{proof}

Lemma \ref{Coloured2RichPointsInHeisenberg} allows us to understand configurations of lines in $\extendedField^3$ that contain many bichromatic 2-rich points. The next lemma concerns configurations of lines in $\extendedField^3$ that contain many $k$-rich points for $k\geq 3$. Since the proof of the next lemma is very similar to that of Lemma \ref{Coloured2RichPointsInHeisenberg} (in particular it uses the same ideas of random sampling and refinement), we will just provide a brief sketch that highlights where the proofs differ.

\begin{lem}\label{3RichPointsInHeisenberg}
Let $\mathcal{L}$ be a set of $n$ lines in the Heisenberg group that are not parallel to the $xy$ plane, with $n\leq (\operatorname{char} \extendedField)^2$ (if $\extendedField$ has characteristic zero then we impose no constraints on $n$). Let $k\geq 3$. Then $\mathcal{L}$ determines $O(n^{3/2}k^{-3/2})$ $k$-rich points.
\end{lem}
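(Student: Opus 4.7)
We mirror the proof of Lemma \ref{Coloured2RichPointsInHeisenberg}, replacing the random $2$-coloring by a random sub-sampling of $\mathcal{L}$ at rate $p=c_0/k$. The effect is that each $k$-rich point of $\mathcal{L}$ becomes a $2$-rich point in a sparser configuration of expected size $n/k$, and Corollary \ref{2RichPointsInHeisenberg} then yields the target bound $O((n/k)^{3/2})=O(n^{3/2}k^{-3/2})$.

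Concretely, let $m$ denote the number of $k$-rich points of $\mathcal{L}$. Sample each line of $\mathcal{L}$ independently with probability $p=c_0/k$ to obtain $\mathcal{L}'$. With high probability $|\mathcal{L}'|\le 2n/k$, and for every $k$-rich point $q$ the number of lines of $\mathcal{L}'$ through $q$ is Binomial with at least $k$ trials and success probability $p$; a brief binomial calculation (exploiting $pk=c_0$) shows that $q$ is $2$-rich in $\mathcal{L}'$ with probability at least an absolute constant $c_1>0$. A standard Markov argument then furnishes a realization in which $|\mathcal{L}'|\le 2n/k$ and at least $c_1 m/2$ of the original $k$-rich points are $2$-rich in $\mathcal{L}'$. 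Applying Corollary \ref{2RichPointsInHeisenberg} to $\mathcal{L}'$, either (i) $\mathcal{L}'$ determines $O(|\mathcal{L}'|^{3/2})=O(n^{3/2}k^{-3/2})$ $2$-rich points---giving $m=O(n^{3/2}k^{-3/2})$ and we are done---or (ii) there is a doubly-ruled surface $Z\subset \extendedField^3$ containing $\ge\sqrt{|\mathcal{L}'|}$ lines of $\mathcal{L}'$ in each of its two rulings.

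The main obstacle is absorbing case (ii) into the upper bound, since (unlike in Corollary \ref{2RichPointsInHeisenberg}) our statement does not permit a ``doubly-ruled'' alternative. The key sub-claim is that any doubly-ruled surface $Z$ contains at most $2n/(k-2)=O(n/k)$ $k$-rich points of $\mathcal{L}$: through each point of $Z$ pass at most two lines contained in $Z$ (one per ruling, as $\deg Z=2$), so any $k$-rich point on $Z$ must receive at least $k-2$ incidences from lines in $\mathcal{L}\setminus\mathcal{L}_Z$; since each such line meets $Z$ in at most $\deg Z=2$ points, double-counting gives the bound. Moreover, every $k$-rich point off $Z$ remains $k$-rich with respect to $\mathcal{L}\setminus\mathcal{L}_Z$, since no line contained in $Z$ passes through an off-$Z$ point.

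We then peel off $\mathcal{L}_Z$ and recurse. A Chernoff estimate combined with a union bound over the polynomially-many candidate rulings (each determined by $O(1)$ lines of $\mathcal{L}'$) shows that once $m$ exceeds a sufficiently large constant multiple of $n^{3/2}k^{-3/2}$, case (ii) forces $|\mathcal{L}_Z|\gtrsim\sqrt{nk}$. Letting $f(n)$ denote the worst-case $k$-rich point count over $n$-line configurations in $\mathbb{H}$, this yields the recursion $f(n)\le f(n-c\sqrt{nk})+O(n/k)$, which iterates $O(\sqrt{n/k})$ times and summing the $O(n/k)$ contributions gives $f(n)=O(n^{3/2}k^{-3/2})$, as required. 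The most delicate step is the Chernoff/union-bound verification that $|\mathcal{L}_Z|\gtrsim\sqrt{nk}$ in case (ii); this is the main point at which the present proof diverges from that of Corollary \ref{2RichPointsInHeisenberg}.
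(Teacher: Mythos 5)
Your overall strategy---random sub-sampling at rate $\Theta(1/k)$ followed by the structure theorem---matches the paper's, but you and the paper diverge at a crucial point: you thin $k$-rich down to $2$-rich and invoke Corollary \ref{2RichPointsInHeisenberg}, while the paper thins $k$-rich down to $3$-rich and then rules out the doubly-ruled alternative \emph{outright}. That distinction is the whole ballgame. A $2$-rich point can sit on a doubly-ruled surface $Z$ with both of its lines lying inside $Z$, so Corollary \ref{2RichPointsInHeisenberg} genuinely leaves you with the doubly-ruled case to discharge; this is what forces you into the peel-and-recurse scheme. A $3$-rich point on a degree-$2$ doubly-ruled surface, by contrast, must receive at least one incidence from a line of $\mathcal{L}'$ \emph{not} contained in $Z$ (since $Z$ has only two rulings), and each such external line crosses $Z$ in at most two points. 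The pruning step in the proof of Lemma \ref{Coloured2RichPointsInHeisenberg} forces $\Omega(|\mathcal{L}'|)$ $3$-rich points onto $Z$, while the external-line count caps them at $2|\mathcal{L}'|$, and with the constants chosen appropriately this is an immediate contradiction. So the paper needs no recursion, no Chernoff bound, and no union bound; it simply observes that the doubly-ruled branch cannot occur for $3$-rich points.

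The step you flag as ``the most delicate'' is in fact a genuine gap as written. You need: with positive probability over the sample $\mathcal{L}'$, every doubly-ruled quadric that acquires $\gtrsim\sqrt{|\mathcal{L}'|}\approx\sqrt{n/k}$ lines of $\mathcal{L}'$ must already have $\gtrsim\sqrt{nk}$ lines of $\mathcal{L}$. The natural union bound is over the $\le\binom{n}{3}$ doubly-ruled quadrics spanned by triples of lines of $\mathcal{L}$ (you cannot union-bound over triples of $\mathcal{L}'$, since $\mathcal{L}'$ is the random object), and the Chernoff tail you get for a single ``light'' quadric is only $e^{-\Omega(\sqrt{n/k})}$. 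This fails to beat $n^3$ once $k\gtrsim n/\log^2 n$, precisely the regime where the target bound $O(n^{3/2}k^{-3/2})$ is smallest and the trivial estimate $m\lesssim n^2/k^2$ from counting line-pairs misses by a logarithmic factor. The argument can probably be repaired---for instance by charging each candidate quadric the extra factor $p^3$ for the event that its three defining lines all survive the sampling, which brings the candidate count down to roughly $(n/k)^3$ and makes the union bound close for $n/k$ larger than an absolute constant, with the remaining range $n/k=O(1)$ handled by the pair-counting bound---but none of this is in your write-up, and none of it is necessary once you switch to $3$-rich points. I'd recommend replacing the $2$-rich reduction with the $3$-rich one: the point that a doubly-ruled surface cannot host many $3$-rich points of a line family is exactly the lever the Heisenberg structure gives you for free.
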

\begin{proof}[Proof sketch]
Let $\mathcal{L}^\prime\subset\mathcal{L}$ be obtained by randomly selecting each line $\ell\in\mathcal{L}$ with probability $2/k$. Then with positive probability we have that $|\mathcal{L}^\prime|\leq 100|\mathcal{L}|/k$, and the number of 3-rich points determined by $\mathcal{L}^\prime$ is at least half the number of $k$-rich points determined by $|\mathcal{L}|$. We now argue as in the proof of Corollary \ref{2RichPointsInHeisenberg}; if $\mathcal{L}^\prime$ determines $\geq C_1 (n/k)^{3/2}$ 3-rich points, then there must exist a doubly-ruled surface $Z$ that contains $\geq (n/k)^{1/2}$ lines from $\mathcal{L}^\prime$, and each of these lines must contain at least $3\sqrt n$ 3-rich points. In particular, there must be at least $3n$ 3-rich points contained in $Z$. Since $Z$ is doubly (not triply!) ruled, this means that each of these 3-rich points must be incident to a line from $\mathcal{L}^\prime$ that is not contained in $Z$. Since each line in $\mathcal{L}^\prime$ not contained in $Z$ can intersect $Z$ in at most 2 points, the number of 3-rich points created in this way is at most $2n$, which is a contradiction.
\end{proof}

With these preliminary results established, we are now ready to prove the main results of this section. 
\begin{proof}[Proof of Theorem \ref{spheresInGeneralFieldThm}]
First, using the line-sphere correspondence described in Section \ref{lineSphereCorrespondenceSubsection}, Theorem \ref{spheresInGeneralFieldThm} is implied by the following statement about lines in the Heisenberg group:

Let $\mathcal{L}$ be a set of $n$ lines in the Heisenberg group that are not parallel to the $xy$ plane, with $n\leq (\operatorname{char} \extendedField)^2$ (if $\extendedField$ has characteristic zero then we impose no constraints on $n$). Then for each $3\leq k\leq n$, there are $O(n^{3/2}k^{-3/2})$ $k$-rich points. Furthermore, at least one of the following two things must occur:
\begin{itemize}
\item There is a doubly-ruled surface, each of whose ruling contain at least $\sqrt n$ lines from $\mathcal{L}$. 
\item $\mathcal{L}$ determines $O(n^{3/2})$ $2$-rich points.
\end{itemize}
The theorem now follows immediately from Corollary \ref{2RichPointsInHeisenberg} and Lemma \ref{3RichPointsInHeisenberg}. 
\end{proof}

Our proof of Theorem \ref{unitDistancesR3} will follow a similar strategy to the proof of Theorem \ref{spheresInGeneralFieldThm}. The main thing to verify is that not too many lines can be contained in a doubly-ruled surface.

\begin{proof}[Proof of Theorem \ref{unitDistancesR3}]
Let $(x_i,y_i,z_i)\in \baseField^3,\ i=1,2,3$ be three points. 
Consider the set of points $(x,y,z)\in \baseField^3$ satisfying 
\begin{align}
(x-x_1)^2 + (y-y_1)^2 + (z-z_1)^2 & = r^2\label{firstSphereEqn},\\
(x-x_2)^2 + (y-y_2)^2 + (z-z_2)^2 & = r^2\label{secondSphereEqn},\\
(x-x_3)^2 + (y-y_3)^2 + (z-z_3)^2 & = r^2\label{thirdSphereEqn}.
\end{align}
Note that these three equations are satisfied precisely when 
$$(x_i,y_i,z_i)\in S(x,y,z),\quad i = 1,2,3,$$
where
\begin{equation}\label{defnSxyzr}
S(x,y,z) = \{(x^\prime,y^\prime,z^\prime)\in \baseField^3\colon (x-x^\prime)^2 + (y-y^\prime)^2 + (z-z^\prime)^2 = r^2 \}.
\end{equation}
In particular, since $-1$ is not a square in $\baseField$, the set $S(x,y,z)$ does not contain any lines (see e.g. \cite[Lemma 6]{Rudnev2}), so \eqref{firstSphereEqn}, \eqref{secondSphereEqn}, and \eqref{thirdSphereEqn} have no solutions when the points $(x_i,y_i,z_i),\ i=1,2,3$ are collinear. 

Subtracting \eqref{secondSphereEqn} from \eqref{firstSphereEqn}, and subtracting \eqref{thirdSphereEqn} from \eqref{firstSphereEqn}, we obtain the equations
\begin{align}
2(x,y,z)\cdot(x_2-x_1,\ y_2-y_1,\ z_2-z_1) & = x_1^2 + y_1^2 + z_1^2 - x_2^2 - y_2^2 - z_2^2,\label{firstPlaneEqn}\\
2(x,y,z)\cdot(x_3-x_1,\ y_3-y_1,\ z_3-z_1)& = x_1^2 + y_1^2 + z_1^2 - x_3^2 - y_3^2 - z_3^2\label{secondPlaneEqn}.
\end{align}

Note that the vectors $(x_2-x_1,\ y_2-y_1,\ z_2-z_1)$ and $(x_3-x_1,\ y_3-y_1,\ z_3-z_1)$ are parallel precisely if the points $(x_1,y_1,z_1)$, $(x_2,y_2,z_2)$, and $(x_3,y_3,z_3)$ are collinear (and thus there are no solutions to \eqref{firstSphereEqn},\eqref{secondSphereEqn}, and \eqref{thirdSphereEqn}).

If $(x_2-x_1,\ y_2-y_1,\ z_2-z_1)$ and $(x_3-x_1,\ y_3-y_1,\ z_3-z_1)$ are not parallel, then the set of points satisfying \eqref{firstSphereEqn}, \eqref{secondSphereEqn}, and \eqref{thirdSphereEqn} is precisely the set of points satisfying \eqref{firstSphereEqn}, \eqref{firstPlaneEqn}, and \eqref{secondPlaneEqn}; this is the intersection of a line and a sphere of radius $r^2$. Again, since a sphere of radius $r^2$ cannot contain any lines, this intersection consists of at most 2 points. To summarize, 
\begin{equation}\label{noK33}
|\{(x,y,z)\in \baseField^3\colon (x,y,z)\ \textrm{satisfies}\ \eqref{firstSphereEqn},\ \eqref{secondSphereEqn},\ \textrm{and}\ \eqref{thirdSphereEqn}\}|\leq 2.
\end{equation}

Let $\mathcal{L}_1=\{\phi(\mathbf p)\colon \mathbf p\in\mathcal{P}\}$, where $\phi$ is the map defined in Section \ref{lineSphereCorrespondenceSubsection}, and we identify a point $\mathbf p\in\mathcal{P}$ with the corresponding sphere of zero radius. Let $\mathcal{L}_2=\{\phi(S_{\mathbf p})\colon \mathbf p\in\mathcal{P}\}$, where $S_{\mathbf p}$ is the sphere with center $\mathbf p$ and radius $r$.

Apply Lemma \ref{Coloured2RichPointsInHeisenberg} to $\mathcal{L}_1\sqcup\mathcal{L}_2$, where the first set of lines is coloured red and the second set is coloured blue. The bound \eqref{noK33} shows that there cannot exist a doubly-ruled surface with three red lines in one ruling and three blue lines in the other ruling. We conclude that $\mathcal{L}_1\sqcup\mathcal{L}_2$ determines $O(n^{3/2})$ bichromatic 2-rich points. Thus there are $O(n^{3/2})$ pairs of points from $\mathcal{P}$ that satisfy \eqref{distanceR}.
\end{proof}

Finally, we will prove Theorem \ref{pointSphereThm}. The main observation is that while a pencil that is exactly $k$-rich determines $\binom{k}{2}$ pairs of tangent spheres, this pencil only determines $k-1$ or fewer point-sphere incidences (i.e.~at most $k-1$ tangencies between a sphere of zero radius and a sphere of nonzero radius).

\begin{proof}[Proof of Theorem \ref{pointSphereThm}]
Let $\mathcal{L}_1$ be the set of lines associated to the points from $\mathcal{P}$ (i.e.~spheres of radius 0), and let $\mathcal{L}_2$ be the set of lines associated to the spheres from $\mathcal{S}$.  Observe that by Remark \ref{allSameDifferentRadiiRem}, each $k$-rich pencil can contribute at most $k-1$ point-sphere incidences. 
Applying Lemma \ref{3RichPointsInHeisenberg} to $\mathcal{L}_1\sqcup\mathcal{L}_2$, we see that for each $k\geq 3$ the set of pencils of richness between $k$ and $2k$ can contribute at most $2kO(n^{3/2}k^{-3/2}) = O(n^{3/2}k^{-1/2})$ incidences. Summing dyadically in $k$, we conclude that the set of pencils of richness at least 3 can contribute $O(n^{3/2})$ incidences.

It remains to control the number of incidences arising from 2-rich points. Let $A = \lceil 2\eta^{-1}\sqrt n\ \rceil$. With this choice of $A$, apply Lemma \ref{Coloured2RichPointsInHeisenberg} to $\mathcal{L}_1\sqcup\mathcal{L}_2$, where the first set of lines is coloured red and the second set is coloured blue. If $\mathcal{L}_1\sqcup\mathcal{L}_2$ contains $O(An)$ bichromatic 2-rich points then we are done. If not, then there is a doubly-ruled surface $Z$ with one ruling that contains at least $A$ red lines and a second ruling that contains at least $A$ blue lines. Let $\mathcal{L}_2^\prime\subset\mathcal{L}_2$ be the set of blue lines from one of these rulings (recall that blue lines correspond to spheres from $\mathcal{S})$. Recall that each red line in $\mathcal{L}_1$ that is not contained in $Z$ intersects $Z$ in at most two points. Thus since $A\geq 2\eta^{-1}\sqrt n$, by pigeonholing there must exist a line $\ell\in\mathcal{L}_2^\prime$ that is incident to fewer than $\eta A$ lines from $\mathcal{L}_1$ that are not contained in $Z$. This implies that the corresponding sphere $S\in\mathcal{S}$ is \emph{not} $\eta$-non-degenerate, which contradicts the assumption that all of the spheres in $\mathcal{S}$ are $\eta$-non-degenerate.
\end{proof}

\section{Improvements over $\RR$}\label{incidenceGeomHeisenbergSec}
In this section we will show how Theorem \ref{spheresInGeneralFieldThm} can be improved when $\baseField=\RR$. The main tool will be the following polynomial partitioning theorem due to Guth \cite{Guth}. 
\begin{thm}\label{guthPartitioning}
Let $\mathcal{V}$ be a set of real algebraic varieties in $\RR^d$, each of which has dimension $e$ and is defined by a polynomial of degree at most $C$. Then for each $D\geq 1$, there is a $d$-variate ``partitioning'' polynomial $P$ of degree at most $D$ so that $\RR^d\backslash Z(P)$ is a disjoint union of $O(D^d)$ ``cells'' (open connected sets), and each of these cells intersect $O(|\mathcal{V}| D^{e-d})$ varieties from $\mathcal{V}$. The implicit constant depends on $d$ and $C$, but (crucially) is independent of $D$ and $|\mathcal{V}|$.
\end{thm}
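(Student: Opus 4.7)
The plan is to establish this partitioning theorem by iterating a polynomial ham-sandwich construction, generalizing the Guth--Katz finite point set partitioning argument to positive-dimensional varieties. The core classical tool is the Stone--Tukey polynomial ham-sandwich theorem, which states that any $\binom{d+r}{d}-1$ bounded open sets in $\RR^d$ can be simultaneously bisected in Lebesgue measure by the zero set of a nonzero polynomial of degree at most $r$; in particular, $N$ such sets can be bisected by a polynomial of degree $O(N^{1/d})$.

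I would then proceed in $k = \lceil d \log_2 D \rceil$ stages. At stage $j$ one maintains a partitioning polynomial $P_j$ of degree $O(2^{j/d})$ whose complement consists of at most $2^j$ cells. To advance from stage $j$ to $j+1$, within each cell $\Omega$ I would thicken each variety $V_i$ meeting $\Omega$ to a small tubular $\eps$-neighborhood $V_i^\eps$, apply the ham-sandwich theorem to find a polynomial of degree $O(2^{(j+1)/d})$ that bisects these finite-volume sets inside $\Omega$, multiply the resulting cell-wise bisecting polynomials together to form $P_{j+1}$, and then send $\eps \downarrow 0$ along a convergent subsequence so that the bisecting polynomial in each cell bisects each variety itself in a suitable limiting sense. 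After $k$ stages, $P := P_k$ has degree $O(D)$ and $\RR^d \setminus Z(P)$ is a disjoint union of $O(D^d)$ cells.

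The main technical obstacle, which is absent in the classical point-partitioning setting, is that the bisecting polynomial at each stage can genuinely cut varieties, so that a single $V_i$ may contribute to both sub-cells of a bisection. I would control this splitting overhead via a Milnor--Thom type estimate: a real algebraic variety of dimension $e$ and degree at most $C$, intersected with the complement of a hypersurface of degree $r$, has at most $O_{C,d}(r^e)$ connected components. Balancing the ham-sandwich halving of the variety count against this $r^e$ splitting factor at each stage, with the choice $r_j \sim 2^{j/d}$ so that $\sum_j r_j \sim D$, a careful recursive accounting yields the bound that each cell of $\RR^d \setminus Z(P)$ meets at most $O(|\mathcal{V}| D^{e-d})$ varieties from $\mathcal{V}$. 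The implicit constants enter only through $d$ and $C$ via the Milnor--Thom bound and are independent of $D$ and $|\mathcal{V}|$, as required.
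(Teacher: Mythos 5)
The paper does not prove Theorem \ref{guthPartitioning}; it cites Guth \cite{Guth}, where the result appears as the main theorem of ``Polynomial partitioning for a set of varieties.'' So you are being asked to reproduce Guth's argument, not the paper's, and while your sketch does land in the right neighborhood (iterated polynomial ham-sandwich at dyadic degree scales, with Milnor--Thom/Barone--Basu controlling the number of connected components of $V_i\setminus Z(P)$), the proposal as written has two concrete gaps.

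First, the construction of $P_{j+1}$ is off. You propose to find, \emph{inside each cell} $\Omega$ of the stage-$j$ partition, a separate bisecting polynomial of degree $O(2^{(j+1)/d})$, and then ``multiply the resulting cell-wise bisecting polynomials together.'' With $\Theta(2^j)$ cells at stage $j$, this product has degree on the order of $2^{j}\cdot 2^{(j+1)/d}$, and summing over $j$ up to $k\sim d\log_2 D$ gives total degree far exceeding $D$. In the Guth--Katz iteration one uses a \emph{single} polynomial $Q_{j+1}$ of degree $O(2^{(j+1)/d})$ that simultaneously bisects all $O(2^j)$ cell-measures (the Stone--Tukey count $\binom{r+d}{d}-1\gtrsim 2^j$ forces $r\gtrsim 2^{j/d}$, and this is tight), and sets $P_{j+1}=P_jQ_{j+1}$, so that $\deg P_k=\sum_j 2^{j/d}\sim D$. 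Your construction needs to be restructured to match this.

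Second, and more fundamentally, bisecting the Lebesgue volume of the thickened varieties $V_i^\eps$ does not control the number of varieties meeting each sub-cell, which is what the theorem bounds. If $Q$ bisects the volume of $V_i^\eps\cap\Omega$ (and hence, in the $\eps\downarrow 0$ limit, bisects the $e$-dimensional measure of $V_i\cap\Omega$), then $V_i$ has positive measure in \emph{both} $\Omega\cap\{Q>0\}$ and $\Omega\cap\{Q<0\}$, i.e.\ $V_i$ is split and contributes to the count on both sides. In the extreme case every variety is split and the per-cell count does not decrease at all. This is precisely the point where varieties behave unlike points, and it is the crux of Guth's theorem. There is also a counting mismatch lurking here: if you try to bisect one measure per (variety, cell) pair you have up to $|\mathcal{V}|\cdot 2^{je/d}$ measures at stage $j$, so the ham-sandwich degree would involve a factor $|\mathcal{V}|^{1/d}$ and would not be $O(2^{j/d})$; but if you bisect only one measure per cell, volume-bisection does not translate into a bound on how many varieties enter the cell. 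The ``careful recursive accounting'' you invoke is exactly where the real work of Guth's proof lives, and it is not the bookkeeping you describe: one has to choose what is being bisected much more cleverly so that the splitting overhead is both globally bounded (which your Barone--Basu estimate does give, at the level of $\sum_\Omega$) \emph{and} spread evenly across cells (which is not implied by volume bisection). I'd recommend reading Guth's paper directly for how this is resolved.
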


While the definition of the dimension of a real algebraic variety is slightly technical, we will only use the elementary facts that points in $\RR^d$ are algebraic varieties of dimension 0 and lines are algebraic varieties of dimension 1. Applying Theorem \ref{guthPartitioning} to a set of points and to a set of lines in $\RR^3$ (with parameter $D/2$ in each case) and taking the product of the resulting partitioning polynomials, we obtain a polynomial whose zero-set efficiently partitions a set of points and a set of lines simultaneously. We will record this observation below. 
\begin{cor}\label{partitionPtsAndLines}
Let $\mathcal{P}$ be a set of points in $\RR^3$ and let $\mathcal{W}$ be a set of lines in $\RR^3$. Then for each $D\geq 1$, there is a polynomial $P$ of degree at most $D$ so that $\RR^3\backslash Z(P)$ is a disjoint union of $O(D^3)$ cells; each cell contains $O(|\mathcal{P}|D^{-3})$ points from $\mathcal{P}$; and each cell intersects $O(|\mathcal{V}| D^{-2})$ lines from $\mathcal{W}$.
\end{cor}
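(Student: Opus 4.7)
The plan is to apply Theorem \ref{guthPartitioning} twice with parameter $D/2$ and then take the product of the two resulting partitioning polynomials. First I would apply Theorem \ref{guthPartitioning} to the set $\mathcal{P}$, viewed as a collection of zero-dimensional subvarieties of $\RR^3$, with parameter $D/2$. This yields a polynomial $P_1$ of degree at most $D/2$ such that $\RR^3\setminus Z(P_1)$ consists of $O(D^3)$ cells, each containing $O(|\mathcal{P}|(D/2)^{0-3})=O(|\mathcal{P}|D^{-3})$ points from $\mathcal{P}$. Then I would apply Theorem \ref{guthPartitioning} to $\mathcal{W}$, viewed as a collection of one-dimensional subvarieties of $\RR^3$, again with parameter $D/2$. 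This yields a polynomial $P_2$ of degree at most $D/2$ whose complement has $O(D^3)$ cells, each of which is met by $O(|\mathcal{W}|(D/2)^{1-3})=O(|\mathcal{W}|D^{-2})$ lines from $\mathcal{W}$.

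Next I would define $P=P_1P_2$, so that $\deg P\le D$ and $Z(P)=Z(P_1)\cup Z(P_2)$. The key observation is that every connected component of $\RR^3\setminus Z(P)$ is contained both in a single connected component of $\RR^3\setminus Z(P_1)$ and in a single connected component of $\RR^3\setminus Z(P_2)$, so it inherits both the point bound from $P_1$ and the line bound from $P_2$. Thus every cell of $\RR^3\setminus Z(P)$ automatically satisfies the desired estimates $O(|\mathcal{P}|D^{-3})$ and $O(|\mathcal{W}|D^{-2})$.

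Finally, to conclude I would bound the total number of cells of $\RR^3\setminus Z(P)$ by $O(D^3)$. This is exactly the standard Milnor--Thom type bound for the number of connected components of the complement of a real algebraic hypersurface of degree at most $D$ in $\RR^3$, the same ingredient that underlies the $O(D^d)$ cell count in Theorem \ref{guthPartitioning} itself; since $\deg P\le D$, the bound applies directly.

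I do not anticipate any serious obstacle here: the argument is a direct bookkeeping exercise combining two applications of Theorem \ref{guthPartitioning}. The only point to verify carefully is that passing to the common refinement $Z(P_1)\cup Z(P_2)$ only subdivides existing cells, so the per-cell bounds from the two separate partitions transfer without loss to the joint partition.
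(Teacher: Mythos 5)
Your proposal is correct and follows exactly the argument the paper gives: apply Theorem \ref{guthPartitioning} separately to $\mathcal{P}$ (dimension $e=0$) and to $\mathcal{W}$ (dimension $e=1$) with parameter $D/2$, take $P=P_1P_2$, and observe that the cells of $\RR^3\setminus Z(P)$ refine both partitions so the per-cell bounds transfer while the total cell count stays $O(D^3)$. The only detail worth flagging, which the paper also elides, is the range $1\leq D<2$ where $D/2<1$ falls outside the stated hypothesis of Theorem \ref{guthPartitioning}; this is harmless because for bounded $D$ the trivial partition (a nonzero constant polynomial, a single cell) already satisfies the conclusion with $O(1)$ constants.
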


Note that while Corollary \ref{partitionPtsAndLines} guarantees that few points are contained in each cell and few lines meet each cell, it is possible that many points and lines are contained in the ``boundary'' $Z(P)$ of the cells. The next lemma helps us understand what configurations of points and lines are possible inside $Z(P)$.
\begin{lem}\label{boundaryBehavior}
Let $P\in \RR[x,y,z]$ be irreducible and let $D = \deg(P)$. If $Z(P)$ is not a plane, then there are at most $O(D^2)$ ``bad'' lines contained in $Z(P)$. If $\ell\subset Z(P)$ is not a bad line, then there are $O(D)$ ``bad'' points $\mathbf p\in\ell$. If $\mathbf q\in\ell$ is not a bad point, then it is incident to at most one additional line that is contained in $Z(P)$. 
\end{lem}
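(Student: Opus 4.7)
My plan is to handle each of the three claims via Bezout-type intersection counts applied to $P$, its partial derivatives, and one auxiliary polynomial $F_P$ of degree $O(D)$ attached to $P$ (the classical Salmon flecnode polynomial, as used in \cite{GKJoints, GZ, Kollar}). I would first split on whether $Z(P)$ is a ruled surface. If $Z(P)$ is not ruled, then the Cayley--Salmon theorem in the effective form of \cite{GKJoints, GZ, Kollar} says that $Z(P)$ contains at most $11D^2 - 24D = O(D^2)$ lines; I would declare every such line bad, so that the remaining two clauses become vacuous and the first is immediate.

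Suppose now $Z(P)$ is ruled. Let $\Sigma = Z(P) \cap Z(\partial_x P) \cap Z(\partial_y P) \cap Z(\partial_z P)$ be the singular locus of $Z(P)$. Since $P$ is irreducible and $Z(P)$ is not a plane, $\Sigma$ is a proper Zariski-closed subset of $Z(P)$, of degree $O(D^2)$ by Bezout, and so contains at most $O(D^2)$ lines. When $D = 2$, $Z(P)$ is an irreducible quadric (a cone or a smooth doubly ruled quadric) and all three claims can be checked by hand. For $D \geq 3$, Segre's refinement of Cayley--Salmon says that $Z(P)$ carries a unique ruling, and that the lines of $Z(P)$ not belonging to this ruling all lie in the zero set on $Z(P)$ of a polynomial of degree $O(D)$ which does not vanish identically on $Z(P)$; hence there are at most $O(D^2)$ such \emph{off-ruling} lines. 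I would declare the bad lines to be the lines of $\Sigma$ together with the off-ruling lines, proving the first claim.

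For the second claim, let $\ell$ be a non-bad line, so $\ell$ belongs to the ruling and is not contained in $\Sigma$. Then some partial derivative $\partial_i P$ does not vanish identically on $\ell$, and Bezout gives $|\ell \cap Z(\partial_i P)| \leq D - 1$; these are the singular points of $Z(P)$ lying on $\ell$. Similarly, $\ell$ meets the union of off-ruling lines in at most $O(D)$ points, by intersecting $\ell$ with the degree-$O(D)$ auxiliary polynomial from above. All such points are declared bad, giving $O(D)$ bad points on $\ell$.

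Finally, let $\mathbf{q} \in \ell$ be a non-bad point; then $\mathbf{q}$ is smooth, so the tangent plane $T_{\mathbf{q}} Z(P)$ is well-defined and contains every line of $Z(P)$ through $\mathbf{q}$. The plane section $T_{\mathbf{q}} Z(P) \cap Z(P)$ is a plane curve of degree $D$ containing $\ell$, hence of the form $\ell \cup C$ with $\deg C = D - 1$, and any line of $Z(P)$ through $\mathbf{q}$ other than $\ell$ is a linear component of $C$ through $\mathbf{q}$. For $D \geq 3$, no such component exists --- $\mathbf{q}$ lies on no off-ruling line by non-badness, and the ruling is unique --- while for $D = 2$ there is exactly one, namely the second-ruling line through $\mathbf{q}$. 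In either case $\mathbf{q}$ lies on at most one additional line of $Z(P)$. The main obstacle in this plan is the construction and verification of the degree-$O(D)$ auxiliary polynomial whose zero set on $Z(P)$ cuts out the off-ruling lines; this is precisely the role played by the Salmon flecnode polynomial, whose relevant properties I would invoke from \cite{GKJoints, GZ} rather than reprove.
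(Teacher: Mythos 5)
Your plan takes a genuinely different route from the paper, and it contains a gap in the third claim that I do not think can be repaired without essentially reintroducing the paper's own machinery.

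The paper's sketch works directly with the notion of a \emph{3-flecnodal point}: a point of $Z(P)$ incident to at least three distinct lines contained in $Z(P)$. Bad lines are defined as lines 3-flecnodal at all but $O(D)$ of their points; the GZ machinery then shows that having $\gtrsim D^2$ such lines forces a Zariski-dense 3-flecnodal locus and hence forces $Z(P)$ to be a plane. Bad points on a good line are its $O(D)$ 3-flecnodal points, and a non-bad point is by definition not 3-flecnodal, so it lies on at most two lines of $Z(P)$. The third claim is thus a tautology given the definitions, and the whole burden is carried by the first claim, which is exactly what \cite{GZ} provides. Your approach instead splits into non-ruled versus ruled, invokes Cayley--Salmon and Segre, and tries to locate the bad objects by hand in the singular locus and the exceptional (off-ruling) lines.

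The gap is in your final paragraph. You assert that for $D \geq 3$ a non-bad smooth point $\mathbf{q}$ lies on no additional line, justified by ``$\mathbf{q}$ lies on no off-ruling line by non-badness, and the ruling is unique.'' Uniqueness of the ruling says there is only one one-parameter family of lines covering the surface; it says nothing about how many lines \emph{of that family} pass through a given point. Two or more ruling lines can meet at a smooth point — this is precisely the nodal or double curve of a scroll, and your bad set (singular points of $Z(P)$ together with points on off-ruling lines) does not contain it. So the statement you actually need — that a smooth point not on an exceptional line and not on the double locus lies on at most two lines — is left unproved, and establishing that the double/triple locus is cut out on $Z(P)$ by a polynomial of degree $O(D)$ is essentially the content of the 3-flecnodal analysis you were hoping to avoid. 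Relatedly, the auxiliary polynomial you invoke cannot be the Salmon flecnode polynomial: that polynomial vanishes identically on every ruled surface, so it cannot cut out the off-ruling lines. What is needed is one of the higher-order polynomials from the \cite{GZ} hierarchy that detects points lying on \emph{several} lines of $Z(P)$, which is again the 3-flecnodal object the paper uses. Your reduction to the non-ruled case via Cayley--Salmon and the $D = 2$ case-check are both fine; the problem is confined to the ruled $D \geq 3$ case, where controlling self-intersections of the ruling requires more than uniqueness of the ruling.
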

\begin{proof}
Statements of this form appear frequently in the literature (see e.g. Theorem 1.9 from \cite{SharirZlydenko}), and follow directly from the machinery of Guth and the author developed in \cite{GZ}. We will briefly outline the proof here. We say a point $\mathbf p\in Z(P)$ is 3-flecnodal if there are at least three distinct lines that contain $\mathbf p$ and are contained in $Z(P)$. If a line $\ell\subset Z(P)$ contains more than $CD$ 3-flecnodal lines for some absolute constant $C$, then all but finitely many points of $\ell$ must be 3-flecnodal (i.e.~a Zariski-dense subset of $\ell$ is 3-flecnodal). If there are $\geq CD^2$ lines $\ell\subset Z(P)$, each of which is 3-flecnodal at all but finitely many points, then there is a Zariski-dense subset $O\subset Z(P)$ so that for each point $\mathbf p\in O$ there are at least three lines containing $\mathbf p$ and contained in $Z(P)$. But since $P$ is irreducible, this immediately implies that $Z(P)$ is a plane. 
\end{proof}

The next two lemmas will help us understand the incidence geometry of configurations of coplanar lines coming from the Heisenberg group. Recall that throughout this section, $\baseField=\RR$ and $\extendedField=\CC$.
\begin{lem}\label{pointsOnALine}
Let $\mathcal{P}\subset\mathbb{H}$ be a set of points with $|\mathcal{P}|\geq 5$. Suppose that every pair of points is connected by a line in the Heisenberg group. Then $\mathcal{P}$ is contained in a (complex) line. 
\end{lem}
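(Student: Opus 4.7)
The plan is to deduce this lemma directly from two facts established during the proof of Lemma~\ref{linearPencilProp}: first, that for every $\mathbf{w}=(x_1,y_1,z_1)\in\mathbb{H}$, every line in $\mathbb{H}$ passing through $\mathbf{w}$ is contained in the plane
$$\Pi_{\mathbf{w}}=\{(x,y,z)\in \extendedField^3 : iy_1 x - ix_1 y - iz = -iz_1\};$$
and second, that if $\mathbf{w}\neq \mathbf{w}'$ are distinct points of $\mathbb{H}$, then $\Pi_{\mathbf{w}}\neq \Pi_{\mathbf{w}'}$.

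First I will fix any point $\mathbf{w}_1\in\mathcal{P}$. For each $\mathbf{w}'\in\mathcal{P}\setminus\{\mathbf{w}_1\}$, the hypothesis supplies a line $\ell\subset\mathbb{H}$ through $\mathbf{w}_1$ and $\mathbf{w}'$. By the first fact above, $\ell\subset\Pi_{\mathbf{w}_1}$, so $\mathbf{w}'\in\Pi_{\mathbf{w}_1}$. Therefore $\mathcal{P}\subset\Pi_{\mathbf{w}_1}$.

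Next I will pick a second point $\mathbf{w}_2\in\mathcal{P}\setminus\{\mathbf{w}_1\}$ (any two points suffice here; the bound $|\mathcal{P}|\geq 5$ is stronger than the proof needs). The identical argument gives $\mathcal{P}\subset\Pi_{\mathbf{w}_2}$, so $\mathcal{P}\subset \Pi_{\mathbf{w}_1}\cap\Pi_{\mathbf{w}_2}$. By the second fact, $\Pi_{\mathbf{w}_1}$ and $\Pi_{\mathbf{w}_2}$ are distinct. Being distinct affine $2$-planes in $\extendedField^3$ that share at least the two points $\mathbf{w}_1$ and $\mathbf{w}_2$, they cannot be parallel, and hence meet in a complex line $L\subset \extendedField^3$. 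This gives $\mathcal{P}\subset L$, as required.

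The expected obstacle here is essentially nil: all of the real work has already been done in Section~\ref{pencilSection}, where it was shown that the lines through a given point $\mathbf{w}\in\mathbb{H}$ are confined to the plane $\Pi_{\mathbf{w}}$, and that these planes separate points of $\mathbb{H}$. Everything else is one line of elementary linear algebra about intersecting two distinct affine planes in $\extendedField^3$. The apparently generous hypothesis $|\mathcal{P}|\geq 5$ seems to be tailored to the later application of the lemma rather than to any genuine need in the present argument.
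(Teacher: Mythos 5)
Your proof is correct and uses the same two key facts as the paper (lines in $\mathbb{H}$ through a point $\mathbf{w}$ are confined to a single complex plane $\Pi_{\mathbf{w}}$, and the map $\mathbf{w}\mapsto\Pi_{\mathbf{w}}$ is injective). The difference is a small but genuine tightening. The paper works with $V_{\mathbf p}=\bigcup_{\mathbf p\in\ell\subset\mathbb{H}}\ell$ and observes only that $\mathcal{P}\setminus\{\mathbf p,\mathbf q\}\subset V_{\mathbf p}\cap V_{\mathbf q}$, hence every subset obtained by deleting two points is collinear; it then needs $|\mathcal{P}|\geq 5$ to stitch these overlapping collinear subsets into one line. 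You instead note directly that $\mathcal{P}\subset\Pi_{\mathbf{w}_1}$ (since every $\mathbf{w}'\in\mathcal{P}$ lies on a line of $\mathbb{H}$ through $\mathbf{w}_1$, which is contained in $\Pi_{\mathbf{w}_1}$, and $\mathbf{w}_1$ itself clearly lies in $\Pi_{\mathbf{w}_1}$), and likewise $\mathcal{P}\subset\Pi_{\mathbf{w}_2}$. Intersecting two distinct complex $2$-planes through the common points $\mathbf{w}_1,\mathbf{w}_2$ immediately gives a line containing all of $\mathcal{P}$, with no counting argument at the end. You are right that this shows the hypothesis $|\mathcal{P}|\geq 5$ is not needed for the conclusion; two points already suffice. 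One small caveat worth flagging: the explicit formula \eqref{tangentPlaneH} for $\Pi_{\mathbf{w}}$ in the paper appears to have a typo (the coefficients should involve $\bar x_1,\bar y_1$ and the constant $\bar z_1$ rather than $x_1,y_1,z_1$), but this does not affect either argument, since all that is used is the existence and injectivity of the assignment $\mathbf{w}\mapsto\Pi_{\mathbf{w}}$.
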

\begin{proof}
Let $\mathbf p, \mathbf q\in\mathcal{P}$.  Define $V_{\mathbf p} = \bigcup_{\mathbf p\in\ell\subset \mathbb{H}}\ell,$ and define $V_{\mathbf q}$ similarly. Then $V_{\mathbf p}$ (resp. $V_{\mathbf q}$) is precisely the intersection of $\mathbb{H}$ with the tangent plane of $\mathbb{H}$ at $\mathbf p$ (resp. $\mathbf q$). In particular, $V_{\mathbf p}\cap V_{\mathbf q}$ is contained in a complex line, and $\mathcal{P}\backslash \{\mathbf p,\mathbf q\}\subset V_{\mathbf p}\cap V_{\mathbf q}$. Since the choice of $\mathbf p$ and $\mathbf q$ was arbitrary, we conclude that every subset of $\mathcal{P}$ cardinality $|\mathcal{P}|-2$ is collinear. Since $|\mathcal{P}|\geq 5,$ this implies that all the points of $\mathcal{P}$ are collinear.
\end{proof}

\begin{lem}\label{preliminaryPtLineBd}
Let $\mathcal{P}$ be a set of $m$ points and let $\mathcal{L}$ be a set of $n$ lines in the Heisenberg group that are not parallel to the $xy$ plane. Then
\begin{equation}
I(\mathcal{P},\mathcal{L})=O(n^{3/2}+m).
\end{equation}
\end{lem}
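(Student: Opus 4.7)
The plan is to bound $I(\mathcal{P},\mathcal{L})$ by a routine dyadic decomposition that reduces everything to the $k$-rich point bound of Lemma \ref{3RichPointsInHeisenberg}. Write $r(p) = |\{\ell \in \mathcal{L} : p \in \ell\}|$ for the richness of a point, and let $t_{\geq K} = |\{p \in \mathcal{P} : r(p) \geq K\}|$. Then I would use the standard inequality
\[
I(\mathcal{P},\mathcal{L}) \;=\; \sum_{p \in \mathcal{P}} r(p) \;\leq\; \sum_{\substack{K \geq 1 \\ K \text{ dyadic}}} 2K \cdot \bigl|\{p \in \mathcal{P} : K \leq r(p) < 2K\}\bigr| \;\leq\; \sum_{\substack{K \geq 1 \\ K \text{ dyadic}}} 2K \cdot t_{\geq K},
\]
which reduces the problem to controlling $t_{\geq K}$ at every dyadic scale.

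For the two smallest scales $K = 1$ and $K = 2$, I would use only the trivial bound $t_{\geq K} \leq m$; these two terms together contribute $O(m)$. For each dyadic $K \geq 3$, I invoke Lemma \ref{3RichPointsInHeisenberg} (which applies with no constraint on $n$ here, since in this section $\baseField = \RR$ and $\extendedField = \CC$ has characteristic zero) to obtain $t_{\geq K} = O(n^{3/2} K^{-3/2})$. The scale-$K$ term then contributes $O(n^{3/2} K^{-1/2})$, and summing the geometric series $\sum_{K \text{ dyadic}, K \geq 3} K^{-1/2}$ produces a total of $O(n^{3/2})$.

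Adding the two estimates gives $I(\mathcal{P},\mathcal{L}) = O(m) + O(n^{3/2})$, which is the claimed bound. There is no genuine obstacle here: the entire argument is a short corollary of Lemma \ref{3RichPointsInHeisenberg} combined with the trivial bounds on 1-rich and 2-rich points. The reason the estimate takes the relatively weak form $O(n^{3/2} + m)$, rather than something Szemer\'edi--Trotter-like, is precisely that this approach extracts no nontrivial information from 1-rich or 2-rich points and absorbs their entire contribution into the $m$ term; any strengthening would require genuinely using Corollary \ref{2RichPointsInHeisenberg} and analyzing doubly-ruled surfaces, which is unnecessary for the applications that follow.
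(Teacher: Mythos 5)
Your proof is correct and takes essentially the same approach as the paper: both bound incidences at points of richness at least $3$ by summing the $k$-rich point estimate from Lemma \ref{3RichPointsInHeisenberg} over dyadic scales, and absorb incidences at points of richness $1$ or $2$ into the $O(m)$ term. The paper's write-up is terser (it does not spell out the dyadic decomposition), but the argument is the same.
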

\begin{proof}
By Corollary \ref{3RichPointsInHeisenberg}, for each $k\geq 3$ the number of $k$-rich points determined by $\mathcal{L}$ is $O(n^{3/2}k^{-3/2})$. Since a $k$-rich point contributes $k$ incidences, we conclude that the number of incidences coming from points with richness $\geq 3$ is $O(n^{3/2})$. The number of incidences coming from points with richness $\leq 2$ is at most $2m$.
\end{proof}

We are now ready to prove the main result of this section.

\begin{prop}\label{ptsLinesInHProp}
Let $\mathcal{P}$ be a set of $m$ points and let $\mathcal{L}$ be a set of $n$ lines in the Heisenberg group that are not parallel to the $xy$ plane. Then
\begin{equation}\label{inductionHypothesis}
I(\mathcal{P},\mathcal{L})\leq C (m^{3/5}n^{3/5}+m+n),
\end{equation}
where $C$ is an absolute constant
\end{prop}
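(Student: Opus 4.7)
The plan is to proceed by induction on $m + n$ via polynomial partitioning, handling the partition surface with the tools already assembled in Section \ref{incidenceGeomHeisenbergSec}. First, I would observe that Lemma \ref{preliminaryPtLineBd} already gives $I(\mathcal{P},\mathcal{L}) = O(n^{3/2} + m)$, which implies the claimed bound outside the range $n^{2/3} \leq m \leq n^{3/2}$; so it suffices to handle that range, in which $m \leq m^{3/5}n^{3/5}$. For the main argument I would apply Corollary \ref{partitionPtsAndLines} (or its analog in the appropriate ambient space, since the complex lines in $\mathbb{H}$ require a real-algebraic reformulation before Guth's theorem can be applied) with degree $D = \lceil A\, m^{3/5} n^{-2/5} \rceil$, where $A$ is a large absolute constant to be tuned. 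This produces a polynomial of degree at most $D$ with $O(D^3)$ cells, each containing $O(m/D^3)$ points and crossed by $O(n/D^2)$ lines, so in particular $\sum_\tau n_\tau = O(nD)$ since each line enters at most $D+1$ cells.

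Invoking the inductive hypothesis cell by cell and summing via H\"older's inequality, the cell contribution is $O(m^{3/5}n^{3/5}) + O(m) + O(nD)$. For the boundary $Z(P)$ I would split $\mathcal{L} = \mathcal{L}_0 \sqcup \mathcal{L}_1$ according to whether a line lies in $Z(P)$. Lines in $\mathcal{L}_1$ meet $Z(P)$ in at most $D$ points each, contributing at most $nD$ incidences. For $\mathcal{L}_0$, decompose $Z(P) = \bigcup_i Z(P_i)$ into irreducible components of degrees $D_i$ with $\sum_i D_i \leq D$. On a plane component, Lemma \ref{linearPencilProp} forces the Heisenberg lines to lie in a single pencil, contributing at most $m_i + n_i$ incidences. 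On a non-plane component, Lemma \ref{boundaryBehavior} bounds the bad lines by $O(D_i^2)$ and constrains each good line so that it contributes $O(D_i)$ incidences at bad points and its good points contribute at most $2m_i$ incidences in total; pooling all $O(D^2)$ bad lines across components and applying Lemma \ref{preliminaryPtLineBd} bounds their incidence count by $O(D^3 + m)$. Summing, the boundary contributes $O(m + n + nD + D^3)$.

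Substituting $D \asymp m^{3/5}n^{-2/5}$, both $nD$ and $D^3$ are $O(m^{3/5}n^{3/5})$ in the active range $n^{2/3} \leq m \leq n^{3/2}$, and the extra $+m$ terms are absorbed since $m \leq m^{3/5}n^{3/5}$ there. This yields $I = O(m^{3/5}n^{3/5} + m + n)$ as claimed. The main obstacle is closing the induction: the cell contribution already carries a factor $CK$, where $K \geq 1$ is the H\"older constant and $C$ is the induction constant, so a naive single-constant induction cannot close. The standard fix is to prove the refined two-constant bound $I \leq C_0 m^{3/5}n^{3/5} + C_1(m+n)$, choose $C_1$ large enough to absorb the absolute constants coming from the boundary terms, and then choose $C_0$ and $A$ so that the $A$-dependent part of the $m^{3/5}n^{3/5}$ coefficient fits inside $C_0 - C_0 K$; the pencil structure on planes (Lemma \ref{linearPencilProp}) is essential here, since without it plane components would feed an irreducible $m \cdot n^{1/2}$-type term into the boundary and block the induction. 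A secondary technical point, which I would address at the outset, is that the complex lines of $\mathbb{H}\subset\extendedField^3$ are not literally lines in $\RR^3$, so either Corollary \ref{partitionPtsAndLines} must be replaced by the instance of Theorem \ref{guthPartitioning} with $e=2$, $d=6$, or we must first pass to a real-algebraic description in which the cells are three-dimensional; in both cases the arithmetic balancing the optimal $D$ is identical.
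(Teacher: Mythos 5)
Your proposal diverges from the paper's proof in two essential ways, and the first of these is a genuine gap that cannot be repaired as described.

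The first issue is the use of the inductive hypothesis inside the cells. You correctly note that after summing the per-cell bound $C(m_\tau^{3/5}n_\tau^{3/5}+m_\tau+n_\tau)$ via H\"older, the $m^{3/5}n^{3/5}$ term acquires a coefficient $CK$ with $K\geq 1$ (the constant $K$ is built from the implicit constants in Theorem~\ref{guthPartitioning}, which you cannot make small by adjusting $D$). But your proposed ``two-constant'' fix cannot close this: writing the inductive bound as $C_0 m^{3/5}n^{3/5}+C_1(m+n)$, the cell contribution is at least $C_0 K\, m^{3/5}n^{3/5}$ plus the positive boundary contributions, and for the induction to close you would need $C_0 K + (\text{positive terms}) \leq C_0$, which is impossible when $K\geq 1$. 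No choice of $C_0,C_1,A$ escapes this. The paper avoids the issue entirely: inside the cells it invokes the \emph{non-recursive} bound of Lemma~\ref{preliminaryPtLineBd} (in its dual form \eqref{prelimPtLineIncidenceBoundDual}), giving $\sum_\tau I_\tau \lesssim D^3\big((n/D^3)^{3/2}+mD^{-2}\big)=n^{3/2}D^{-3/2}+mD$, which balances to $m^{3/5}n^{3/5}$ at the right $D$. The induction hypothesis is applied only to the ``bad'' lines on $Z(P)$; since there are at most $O(D^2)\leq m/2$ of them (after choosing the constant in $D$ small), the induction variable $m$ strictly decreases and the induction closes cleanly.

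The second issue is that you never pass to the dual picture. The paper maps each line $\ell\in\mathcal{L}$ to a point $\mathbf q_\ell\in\RR^4$ and each Heisenberg point $\mathbf p\in\mathcal{P}$ to the line $W_{\mathbf p}\subset\RR^4$ of \eqref{linesThruAPt}, then projects linearly to $\RR^3$ without losing incidences. This is not a cosmetic reformulation: it is what makes the preliminary bound produce a gain in cells (in the non-dual picture, the cell sum $\sum_\tau (n_\tau^{3/2}+m_\tau)$ collapses back to $n^{3/2}+m$ with no improvement, since $D^3\cdot(n/D^2)^{3/2}=n^{3/2}$), and it is what lands us in $\RR^3$ with genuine real points and lines so that Corollary~\ref{partitionPtsAndLines}, Lemma~\ref{boundaryBehavior}, and Lemma~\ref{pointsOnALine} apply directly. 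Your proposed alternatives---either $e=2,\,d=6$ in Theorem~\ref{guthPartitioning}, or an unspecified ``real-algebraic description''---do not have ``identical arithmetic'': with $e=2,d=6$ the crossing count for a surface against $Z(P)$ is $O(D^2)$ rather than $O(D)$, changing the balancing of $D$ and forcing you to redevelop the boundary lemma for $2$-dimensional surfaces in $\RR^6$, none of which you address. Finally, a smaller point: Lemma~\ref{preliminaryPtLineBd} gives $I=O(n^{3/2}+m)$, which dominates the target $C(m^{3/5}n^{3/5}+m+n)$ only when $m\gtrsim n^{3/2}$; the range $m\leq n^{2/3}$ is \emph{not} dispatched by the preliminary bound as you claim, and must be handled by the main argument (the paper's choice $D=c\min(n^{3/5}m^{-2/5},m^{1/2})$ does exactly this).
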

\begin{proof}
Our proof is closely modeled on the techniques of Sharir and Zlydenko from \cite{SharirZlydenko}. We will prove the result by induction on $m$; the base case for our induction will be when $m\leq m_0$, where $m_0$ is an absolute constant to be specified below. Observe that if $m^2 \geq cn^3$ for a fixed constant $c>0$, then then the result follows from Lemma \ref{preliminaryPtLineBd}; indeed, if $m^2 \geq cn^3$ then $n^{3/2}+m=O(m)$ and thus
$$
I(\mathcal{P},\mathcal{L})=O(n^{3/2}+m) = O(m^{3/5}n^{3/5}+m+n).
$$
Henceforth we will assume that 
\begin{equation}\label{msqleqnCub}
m^2 \leq cn^3,
\end{equation}
where $c$ is a constant that will be specified below.

For each line $\ell\in\mathcal{L}$ of the form $(0,\omega,t)+\CC(1,u,\bar\omega)$, let $\mathbf{q}_{\ell}\in\RR^4$ be the point $(t,u,\operatorname{Re} \omega,\operatorname{Im} \omega)$. Define
$$
\mathcal{Q} = \{\mathbf{q}_{\ell}\colon \ell\in\mathcal{L}\}.
$$  
For each point $\mathbf{p}\in\mathcal{P}$, let $W_{\mathbf{p}}\subset\RR^4$ be the line described by \eqref{linesThruAPt}. Define 
$$
\mathcal{W} = \{W_{\mathbf{p}}\colon \mathbf p\in\mathcal{P}\}.
$$ 
By Lemma \ref{preliminaryPtLineBd}, for any set of points $\mathcal{Q}^\prime\subset\mathcal{Q}$ and any set of lines $\mathcal{W}^\prime\subset\mathcal{W}$, we have the incidence bound
\begin{equation}\label{prelimPtLineIncidenceBoundDual}
I(\mathcal{Q}^\prime,\mathcal{W}^\prime) = O(|\mathcal{Q}^\prime|^{3/2}+|\mathcal{W}^\prime|).
\end{equation}

Let $A\colon\RR^4\to\RR^3$ be a surjective linear map; we will choose this map so that the image of every line in $\mathcal{W}$ remains a line, and no additional incidences are added. We will abuse notation slightly by replacing $\mathcal{Q}$ with the set $\{A(q)\colon q\in\mathcal{Q}\}$ (so now $\mathcal{Q}\subset\RR^3$) and replacing $\mathcal{W}$ with the set $\{A(W)\colon W\in\mathcal{W}\}$ (so now $\mathcal{W}$ is a set of lines in $\RR^3$). Note that \eqref{prelimPtLineIncidenceBoundDual} remains true with these new definitions of $\mathcal{Q}$ and $\mathcal{W}$. 

Define 
\begin{equation}\label{defnD}
D = \lfloor c\min(n^{3/5}m^{-2/5}, m^{1/2})\rfloor,
\end{equation}
where $c>0$ is the same constant as in \eqref{msqleqnCub}. In the analysis that follows, ``Case 1'' will refer to the situation where $n^{3/5}m^{-2/5}\geq m^{1/2}$, and ``Case 2'' will refer to the situation where $n^{3/5}m^{-2/5}< m^{1/2}$.

Observe that $D\leq cn^{1/3}.$ Indeed, in Case 1 we have $m\geq n^{2/3}$ and thus $D\leq n^{3/5}m^{-2/5}\leq cn^{1/3}$. In Case 2 we have $m\leq n^{2/3}$, and again $D\leq cm^{1/2}\leq cn^{1/3}$. If we select $m_0\geq c^{-2}$, then we have ensured that $cm^{1/2}> 1$. Inequality \eqref{msqleqnCub} implies that $cn^{3/5}m^{-2/5}\geq 1$; thus we can assume that $D\geq 1$. In summary, we have
\begin{equation}\label{sizeOfD}
1\leq D\leq cn^{1/3}.
\end{equation}
Apply Corollary \ref{partitionPtsAndLines} to $\mathcal{Q}$ and $\mathcal{W}$ with this choice of $D$; we obtain a partitioning polynomial $P\in\RR[x,y,z]$. $\RR^3\backslash Z(P)$ is a union of $O(D^3)$ cells, each of which contains $O(n/D^3)$ points from $\mathcal{Q}$ and each of which intersects $O(m/D^2)$ line from $\mathcal{W}$.  

In Case 1, we use \eqref{prelimPtLineIncidenceBoundDual} to control the number of point-line incidences inside the cells. If we index the cells $O_1,\ldots,O_t$ with $t = O(D^3)$ and define $\mathcal{W}_{i}$ to be the set of lines from $\mathcal{W}$ that intersect $O_i$, then
\begin{equation}\label{inCellBoundOne}
\begin{split}
I(\mathcal{Q}\backslash Z(P),\mathcal{W}) & = \sum_i I\big( \mathcal{Q}\cap O_i,\ \mathcal{W}_{i}\big)\\
& \lesssim D^3 \big((nD^{-3})^{3/2}+ mD^{-2}\big) \\
& \lesssim n^{3/2}D^{-3/2}+mD\\
& \lesssim  m^{3/5}n^{3/5}.
\end{split}
\end{equation}
In Case 2 each cell meets $O(1)$ lines, and thus
\begin{equation}\label{inCellBoundTwo}
I(\mathcal{Q}\backslash Z(P),\mathcal{W})  = \sum_i  I\big( \mathcal{Q}\cap O_i,\ \mathcal{W}_{i}\big) \lesssim |\mathcal{Q} \backslash Z(P)|.
\end{equation}

Let $\mathcal{W} = \mathcal{W}^\prime \sqcup\mathcal{W}^{\prime\prime}$, where $\mathcal{W}^\prime$ consists of the lines not contained in $Z(P)$ and $\mathcal{W}^{\prime\prime}$ consists of the lines contained in $Z(P)$. Since each line in $\mathcal{W}^\prime$ can intersect $Z(P)$ at most $D$ times, we have
\begin{equation}\label{incidencesBoundaryNonContainedLines}
I(\mathcal{Q}\cap Z(P),\ \mathcal{W}^\prime)\leq Dm\leq cm^{3/5}n^{3/5}.
\end{equation}

Our next task is to control the number of incidences formed by lines in $\mathcal{W}^{\prime\prime}$. Factor $P$ into irreducible components $P_1\cdots P_h\cdot P_{h+1}\cdots P_k$, where the polynomials $P_1,\ldots,P_h$ each have degree at least two and the polynomials $p_{h+1}\ldots p_k$ have degree one (it is possible that all polynomials have degree at least two or no polynomials have degree at least two. In the former case we set $k=h$ and in the latter case we set $h = -1$). For each index $i$, define
$$
\mathcal{Q}_i=\mathcal{Q}\cap Z(P_i)\ \backslash\bigcup_{1\leq j<i}\mathcal{Q}_j.
$$
We have $\mathcal{Q}\cap Z(P) = \bigsqcup_{i=1}^k \mathcal{Q}_i$. Similarly, let $\mathcal{W}_i$ be the set of lines that are contained in $Z(P_i)$ and are not contained in $Z(P_j)$ for any index $j<i$. 

If $\mathbf{q}\in\ell$ for some $\mathbf q\in\mathcal{Q}_i$ and $\ell\in \mathcal{W}_j$ with $i\neq j$, then the line $\ell$ must intersect $Z(P_i)$, and $\ell$ cannot be contained in $Z(P_i)$. Since $\ell$ can intersect $Z(P_i)$ in at most $\deg(P_i)$ points, we can bound the number of such ``cross-index'' incidences as follows.
\begin{equation}\label{incidencesCrossIndexNonPlanar}
\begin{split}
|\{(\mathbf q,\ell)\colon & \mathbf q\in\ell,\ \mathbf q\in\mathcal{Q}_i,\ \ell\in \mathcal{W}_j\ \textrm{for some index}\ j\neq i\}|\\
&\leq \sum_{i=1}^k m\operatorname{deg}(P_i)\\
&\leq Dm\\
&\leq cm^{3/5}n^{3/5}.
\end{split}
\end{equation}
It remains to control incidences $\mathbf p\in \ell$ where $\mathbf p\in\mathcal{Q}_i$ and $\ell\in\mathcal{W}_i$. Let $\mathcal{Q}_{i}^{\operatorname{rich}}$ consist of those points $\mathbf q \in\mathcal{Q}_i$ that are incident to at least 3 lines from $\mathcal{W}_i$. Define $\mathcal{Q}_{i}^{\operatorname{poor}} = \mathcal{Q}_i\backslash \mathcal{Q}_{i}^{\operatorname{rich}}$. By Lemma \ref{boundaryBehavior}, for each index $1\leq i\leq h$ there are $\leq C_1 \deg(P_i)^2$ ``bad'' lines for some absolute constant $C_1$. For each index $1\leq i\leq h$, let $\mathcal{W}_i^{\operatorname{bad}}\subset\mathcal{W}_i$ be the set of bad lines associated to $P_i$ and let $\mathcal{W}_i^{\operatorname{good}}=\mathcal{W}_i\backslash\mathcal{W}_i^{\operatorname{bad}}$. If we choose the constant $c$ from \eqref{defnD} sufficiently small, then 
$$
\sum_{i=1}^h |\mathcal{W}_i^{\operatorname{bad}}|\leq \sum_{i=1}^h C_1 \deg(P_i)^2 \leq C_1D^2 \leq  m/2.
$$ 
Thus by the induction hypothesis we have
\begin{equation*}
\begin{split}
\sum_{i=1}^h I(\mathcal{Q}_{i}^{\operatorname{rich}},\mathcal{W}_i^{\operatorname{bad}})&\leq
I\Big(\bigcup_{i=1}^h\mathcal{Q}_{i}^{\operatorname{rich}},\ \bigcup_{i=1}^h\mathcal{W}_i^{\operatorname{bad}}\Big)\\
& \leq C \Big(\big(\frac{m}{2}\big)^{3/5}n^{3/5}+\big|\bigcup_{i=1}^h\mathcal{W}_i^{\operatorname{bad}}\big|+\big|\bigcup_{i=1}^h\mathcal{Q}_{i}^{\operatorname{rich}}\big|\Big).
\end{split}
\end{equation*}
If $\ell\in \mathcal{W}_i^{\operatorname{good}}$ then $\ell$ can be incident to at most $C_2D$ bad points, for some absolute constant $C_2$. Since 
$$
\sum_{i=1}^h C_2D|\mathcal{W}_i^{\operatorname{good}}| = O(m^{3/5}n^{3/5}),
$$ 
we have 
\begin{equation*}
\sum_{i=1}^h I(\mathcal{Q}_{i}^{\operatorname{rich}},\mathcal{W}_i) \leq C \Big(\big(\frac{m}{2}\big)^{3/5}n^{3/5}+\big|\bigcup_{i=1}^h\mathcal{W}_i^{\operatorname{bad}}\big|+\big|\bigcup_{i=1}^h\mathcal{Q}_{i}^{\operatorname{rich}}\big|\Big)+O(m^{3/5}n^{3/5}).
\end{equation*}
If $\mathbf{q}\in\mathcal{Q}_{i}^{\operatorname{poor}}$ then $\mathbf{q}$ can be incident to at most two lines from $\mathcal{W}_i$. Thus as long as $C\geq 2$ we have  

\begin{equation}\label{incidencesMatchingIndexNonPlanar}
\sum_{i=1}^h I(\mathcal{Q}_{i},\mathcal{W}_i) \leq C \Big(\big(\frac{m}{2}\big)^{3/5}n^{3/5}+\sum_{i=1}^h |\mathcal{W}_i|+\sum_{i=1}^h |\mathcal{Q}_i| \Big)+O(m^{3/5}n^{3/5}).
\end{equation}

It remains to bound $I(\mathcal{Q}_i,\mathcal{W}_i)$ for $h+1\leq i\leq k$. By Lemma \ref{pointsOnALine}, if $|\mathcal{W}_i|\geq 5$, then the lines in $\mathcal{W}_i$ correspond to a set of points in $\mathbb{H}$ that are collinear. In particular, this implies
$$
I(\mathcal{Q}_i,\mathcal{W}_i)\leq |\mathcal{Q}_i| + |\mathcal{W}_i|.
$$
On the other hand, if $|\mathcal{W}_i|\leq 4$ then
$$
I(\mathcal{Q}_i,\mathcal{W}_i) \leq 4|\mathcal{Q}_i|.
$$
Combining these bounds with \eqref{incidencesMatchingIndexNonPlanar} and summing in $i$, we conclude that if $C\geq 4$ then 
\begin{equation}\label{incidencesMatchingIndexPlanarAndNonPlanar}
\begin{split}
\sum_{i=1}^k I(\mathcal{Q}_{i},\mathcal{W}_i) &\leq C \Big(\big(\frac{m}{2}\big)^{3/5}n^{3/5}+\sum_{i=1}^h |\mathcal{W}_i|+\sum_{i=1}^h |\mathcal{Q}_i| \Big) + C\Big( \sum_{i=h+1}^k|\mathcal{W}_i|+\sum_{i=h+1}^k|\mathcal{Q}_i|\Big) \\
&\leq C \Big(\big(\frac{m}{2}\big)^{3/5}n^{3/5}+m+n \Big).
\end{split}
\end{equation}

Combining \eqref{inCellBoundOne}, \eqref{inCellBoundTwo}, \eqref{incidencesBoundaryNonContainedLines}, \eqref{incidencesCrossIndexNonPlanar}, and \eqref{incidencesMatchingIndexPlanarAndNonPlanar}, we conclude that there is an absolute constant $C_2$ so that
\begin{equation}\label{combiningBound}
I(\mathcal{Q},\mathcal{W})\leq C \Big(\big(\frac{m}{2}\big)^{3/5}n^{3/5}+m+n \Big) + C_2m^{3/5}n^{3/5}.
\end{equation}
If the constant $C$ is chosen sufficiently large so that $C  \geq 2^{3/5}C_2$, then we obtain \eqref{inductionHypothesis} and the induction closes.
\end{proof}

\begin{proof}[Proof of Theorem \ref{spheresInR3Thm}]
Let $\mathcal{L}=\{\phi(S)\colon S\in\mathcal{S}\}$ and let $k\geq 3$. We need to prove that $\mathcal{L}$ determines $O(n^{3/2}k^{-5/2}+n/k)$ $k$-rich points. When $k$ is small the result follows from Theorem \ref{spheresInGeneralFieldThm}, so we can assume that $k\geq C$, for some absolute constant $C$ to be determined later. Let $\mathcal{P}\subset \mathbb{H}$ be the set of $k$-rich points determined by $\mathcal{L}$, and let $m = |\mathcal{P}|$. We clearly have $I(\mathcal{P},\mathcal{L})\geq km$. On the other hand, Proposition \ref{ptsLinesInHProp} implies that
\begin{equation}\label{numberOfPtLineIncidencesApp}
I(\mathcal{P},\mathcal{L})=O(m^{3/5}n^{3/5}+m+n). 
\end{equation}
Thus if the constant $C$ is selected sufficiently large compared to the implicit constant in \eqref{numberOfPtLineIncidencesApp}, then $m=O(n^{3/2}k^{-5/2}+nk^{-1})$, as desired.
\end{proof}

\bibliographystyle{abbrv}
\bibliography{sphere_tangencies}

\end{document}